\numberwithin{equation}{section}
\def\m{\mbox}   
\newcommand{\q}{\quad}    \def\R{{\mathbb R}}
\newtheorem{lemma}{Lemma}
\newtheorem{prop}[lemma]{Proposition}
\newtheorem{theorem}[lemma]{Theorem}
\numberwithin{lemma}{section}
\begin{document}
\title{Stochastic convergence of regularized solutions for backward heat conduction problems}

\author{Zhongjian Wang\thanks{Division of Mathematical Sciences, School of Physical and Mathematical Sciences, Nanyang Technological University, 21 Nanyang Link, Singapore 637371. (zhongjian.wang@ntu.edu.sg).
}
\and
Wenlong Zhang\thanks{Corresponding author. Department of Mathematics $\&$  National Center for Applied Mathematics
Shenzhen, Southern University of Science and Technology (SUSTech), 1088 Xueyuan Boulevard, University Town of Shenzhen, Xili, Nanshan, Shenzhen, Guangdong Province, P.R.China. (zhangwl@sustech.edu.cn).
}
\and
Zhiwen Zhang\thanks{Corresponding author. Department of Mathematics, The University of Hong Kong, Pokfulam Road, Hong Kong SAR, P.R.China. (zhangzw@hku.hk).
}}

\date{}
\maketitle

\begin{abstract}
In this paper, we study the stochastic convergence of regularized solutions for backward heat conduction problems. These problems are recognized as ill-posed due to the exponential decay of eigenvalues associated with the forward problems. We derive an error estimate for the least-squares regularized minimization problem within the framework of stochastic convergence. Our analysis reveals that the optimal error of the Tikhonov-type least-squares optimization problem depends on the noise level, the number of sensors, and the underlying ground truth. Moreover, we propose a self-adaptive algorithm to identify the optimal regularization parameter for the optimization problem without requiring knowledge of the noise level or any other prior information, which will be very practical in applications. We present numerical examples to demonstrate the accuracy and efficiency of our proposed method. These numerical results show that our method is efficient in solving backward heat conduction problems.

\medskip
\noindent \textit{\textbf{AMS subject classification:}} 35R30,  65J20,  65M12, 65N21, 78M34.  
\end{abstract}

%\noindent 
%{\footnotesize {\bf Mathematics Subject Classification}
%(MSC2010): 35R30, 35B30}
%35R30  	Inverse problems for PDEs
%65J20  	Numerical solutions of ill-posed problems in abstract spaces; regularization
%65M60  	Finite element, Rayleigh-Ritz and Galerkin methods for initial value and initial-boundary value problems involving PDEs
%65N21  	Numerical methods for inverse problems for boundary value problems involving PDEs
%65N30  	Finite element, Rayleigh-Ritz and Galerkin methods for boundary value problems involving PDEs
%65M12      Stability and convergence of numerical methods
%78M34  	Model reduction

%\noindent 
{\footnotesize {\bf Keywords}: Backward heat conduction problems; ill-posed problem; regularization method; stochastic error estimate; optimal regularization parameter.}

%%%%%%%%%%%%%%%%%%%%%%%%%%%%%%%%%%%%%%%%%%%%%%%%%%%
%%%%%%%%%%%%%%%%%%%%%%%%%%%%%%%%%%%%%%%
\section{Introduction}

Inverse problems associated with parabolic equations have attracted considerable attention in both mathematics and engineering research fields \cite{Kaipio:2005,hasanouglu2021introduction}. In general, inverse problems can be categorized into three types: identifying physical parameters or source terms in the PDEs; determining the system's initial state; and determining the boundary conditions. In this paper, we focus on the second type of inverse problem within the context of heat transfer. Specifically, we aim to determine the initial condition from transient temperature measurement at the final time $T$. This problem is commonly referred to as the backward heat conduction problem (BHCP).

The main difficulty in solving the BHCP arises from the exponential decay of forward solutions of the heat equations with respect to the initial data. Specifically, this type of problem is considered ill-posed in the sense of Hadamard. We refer the interested reader to the papers \cite{new6,lavrent_ev1986ill,Kabanikhin2008survey} for a comprehensive review of the definitions and examples of inverse and ill-posed problems.  

The BHCP exemplifies an severely ill-posed problem that is not solvable using traditional numerical methods and requires special techniques to be employed, which have been long-standing computational challenges. To point out, we are not trying to fill this gap to solve this severely ill-posed inverse problem ultimately in this paper. We will treat this problem in a different way and try to give the theoretical and numerical analysis from the stochastic point of view.  Conditions that render the BHCP well-posed have been investigated in \cite{clark1994quasireversibility,denche2005modified}. These studies introduced supplementary hypotheses, constraining the class of functions to which the solution must belong. However, in practical settings, verifying the hypotheses for initial conditions is very difficult. Consequently, numerical methods with less stringent assumptions regarding initial conditions for solving BHCPs prove to be more beneficial.

In response to these challenges, many regularization techniques have been developed for solving BHCPs. For example, Sobolev error estimates and a prior parameter selection for semi-discrete Tikhonov regularization were obtained in \cite{Krebs2009backward}. A backward problem for the one-dimensional heat conduction equation, with the measurements on a discrete set, was considered in \cite{cheng2020backward}, and the uniqueness of recovering the initial value was proved using the analytic continuation method. To address the ill-posedness, discrete Tikhonov regularization with the generalized cross-validation rule was employed to obtain a stable numerical approximation of the initial value. It is worth noting that in \cite{Muniz1999backward}, a comparison of various inverse methods for estimating the initial condition of the heat equation was studied, demonstrating that explicit approaches to BHCP yield disastrous results unless some form of regularization is utilized.

Various other approaches have been proposed for solving BHCPs. Perturbation-based methods were introduced in \cite{Qian2007backward,Trong2007backward}, where the operator is replaced with a perturbed higher-order one that exhibits improved invertibility properties. A homotopy-based iterative regularizing scheme was proposed for solving BHCP in \cite{liu2018solving}. This approach provided error analysis for the regularizing solution with noisy measurement data and demonstrated that the algorithm is easily implementable with low computational costs. In \cite{chiwiacowsky2003different}, different approaches, such as the conjugate gradient method with the adjoint equation, regularized solution using a quasi-Newton method, and regularized solution via the genetic algorithm, were compared for solving BHCPs.

In this paper, we study a practical scenario in which observational measurements are collected point-wise from a set of distributed sensors located at $\{x_i\}^n_{i=1}$ across the physical domain \cite{AB05, L08,nelson20, NNR98}. Each sensor is subject to independent additional noise or random error due to natural noise, measurement errors, and other uncertainties in the model. We aim to adopt a realistic approach for solving such inverse problems by optimizing the mean-square error using appropriate Tikhonov-type regularizations \cite{Chen-Zhang2022, GER83, WY10}. To accomplish this,  we utilize classical methods such as regression methods, linear and nonlinear programming methods \cite{GER83}, linear and nonlinear conjugate gradient methods \cite{AB05, WY10}, and Newton-type methods during the optimization process.

Then, we investigate the stochastic convergence of the proposed method for two general types of random variables. In \cite{Chen-Zhang,Chen-Zhang2022, WANG2023112156}, the authors develop the tools to study the stochastic convergence of the numerical method for an inverse source problem. Here, we derive the convergence analysis of the BHCP with Tikhonov regularization. Compared to the inverse source problem of the parabolic equation, the backward problem is severely ill-posed due to the exponential decay of the eigenvalues \cite{new6}. Our analysis in Theorem \ref{main-thm} demonstrates that the optimal error of the Tikhonov type least-squares optimization problem depends on the noise level, the number of sensors, and the ground truth. Furthermore, we describe the asymptotic behavior of the regularization parameter with respect to these model configurations. Unlike the traditional estimate of the regularization parameter, we show that this parameter should also vary with different final times $T$, and we point out this dependence.

Leveraging these asymptotic estimates, we design a self-consistent algorithm to determine the optimal regularization parameter for solving BHCPs without knowing any prior information about the inverse problem. Finally, we carry out numerical experiments to demonstrate the accuracy and efficiency of the proposed method. Several kinds of source functions, including both smooth and discontinuous ones, are considered. From the numerical results, we observe that the proposed method successfully recovers the initial conditions for different types of source functions. The algorithm demonstrates robust performance, even in the presence of measurement noise and discontinuities. Moreover, the self-consistent approach for selecting the optimal regularization parameter proves to be effective in balancing the trade-off between fitting the data and minimizing the impact of the ill-posedness. In summary, these results validate the theoretical convergence analysis and demonstrate the practical applicability of our method in solving BHCPs with noisy measurements from distributed sensors.

The rest of the paper is organized as follows. In Section 2, we introduce the setting of the backward problem of parabolic equations. Section 3 presents the stochastic convergence analysis for the backward problem of parabolic equations. More details about the implementation of the proposed method will be discussed in Section 4.  Section 5 presents numerical results to demonstrate the accuracy of our method. Finally, in Section 6, we provide concluding remarks.

%%%%%%%%%%%%%%%%%%%%%%%%%%%%%%%%%%%%%%%
\section{Backward heat conduction problems}
%%%%%%%%%%%%%%%%%%%%%%%%%%%%%%%%%%%%%%%
To start with, we consider an initial value heat equation as follows: 
\begin{equation}\label{zz0}
	\left\{
	\begin{aligned}
		u_t +\mathcal{L}u &= 0 &\mbox{in } \Omega\times (0, T), \\
		u(x, t)&= 0  &\mbox{on } \partial \Omega\times (0, T),\\
		u(x, 0)&= f(x) &\mbox{in } \Omega,
	\end{aligned} 
	\right.
\end{equation}
where $\Omega\subset \mathbb R^d$ $(d=1,2,3)$ is a bounded domain with $C^2$ boundary or a convex domain satisfying the uniform cone condition, $\mathcal{L}$ is a second-order elliptic operator given by 
\begin{align}
    \mathcal{L}u=-\nabla\cdot (a(x)\nabla u) +c(x)u,
\end{align} and
$f(x)$ is the initial condition. We assume the  elliptic operator $\mathcal{L}$ is uniform elliptic, i.e. there exist $a_{\min}, a_{\max}>0$, such that $a_{\min}<a(x)<a_{\max}$ for all $x \in \Omega$. Moreover, we assume $a(x)\in C^{1}(\bar{\Omega})$, $c(x)\in C(\bar\Omega)$ and $c(x)\geq 0$. 

Let $u$ be the solution of the heat equation \eqref{zz0}. We define the forward operator $\mathcal{F}_{t}:$ $L^2(\Omega)\rightarrow H^2(\Omega)$ by $\mathcal{F}_tf=u(\cdot,t)$. Then the forward problem is to compute the solution $u(\cdot,t)$ for $t>0$ with known initial condition $f(x)$. 

In the backward heat conduction problem, we aim to reconstruct an unknown initial condition $f(x)\in L^2(\Omega)$ based on the final time measurement $u(\cdot,T)$ for a given final time $T>0$. Specifically, we focus on a very practically physical scenario, where we assume that observational measurements are collected point-wisely over a set of distributed sensors located at $\{x_i\}^n_{i=1}$ over the physical domain $\Omega$ (see e.g. \cite{new2, new6, L08, NNR98}). Additionally, taking into account of uncertainty in data, we also assume that the measurement data is always blurred with noise and is represented as $m_i=\mathcal{F}_Tf^*(x_i)+e_i$, $i=1, \cdots, n$, where $f^*\in L^2(\Omega)$ is the true initial condition and $\{e_i\}^n_{i=1}$ are independent random variables on a suitable probability space with zero means, i.e. $\mathbb{E}[e_i]=0$. Both of the assumptions are very practical in applications, and the analysis in this paper is rare in the mathematical field.

First, we should verify that $\mathcal{F}_Tf^*(x)$ is well-defined point-wisely to ensure the validity of the aforementioned considerations. Let $\mathcal{L}_0$ denote the operator of $\mathcal{L}$ with zero Dirichlet boundary condition. According to the analytic semigroup theory \cite{renardy2006introduction,Pazy:83}, $\mathcal{F}_t=e^{-\mathcal{L}_0t}$ is an analytic semigroup, and the second-order elliptic operator $-\mathcal{L}_0$ is the infinitesimal generator of $e^{-\mathcal{L}_0t}$. Moreover, we have the following estimate: there exists a constant $\omega$ such that,
\begin{equation}
\|\mathcal{L}_0e^{-\mathcal{L}_0t}\|\leq Ce^{\omega t}\frac{1}{t}, ~ t>0.
\end{equation}
As a consequence, $\|u(\cdot,T)\|_{H^2(\Omega)}\leq\|\mathcal{L}_0u(\cdot,T)\|_{L^2(\Omega)}\leq  C_T\|f^*\|_{L^2(\Omega)}$, i.e. $\|\mathcal{F}_Tf^*\|_{H^2(\Omega)}\leq C_T\|f^*\|_{L^2(\Omega)}$, where the constant $C_T$ only depends on $T$. According to the embedding theorem of Sobolev spaces, we know that $H^2(\Omega)$ is continuously embedded into $C(\bar\Omega)$, so that $\mathcal{F}_Tf^*(x)$ is well-defined point-wisely for all $x\in \bar\Omega$. 

Without loss of generality, we assume that the scattered locations $\{x_i\}_{i=1}^n$ are uniformly distributed
in $\Omega$, i.e., there exists a constant $B>0$ such that ${d_{\max}}/{d_{\min}} \leq B$, where 
${d_{\max}}$ and ${d_{\min}}$ are defined by 
\begin{align}\label{aa}
d_{\max}=\mathop {\rm sup}\limits_{x\in \Omega} \mathop {\rm inf}\limits_{1 \leq i \leq n} |x-x_i|  
~~~\mbox{and} ~~ ~
d_{\min}=\mathop {\rm inf}\limits_{1 \leq i \neq j \leq n} |x_i-x_j|.
\end{align}
We denote the inner product between the measurement data and any function $v\in C(\bar\Omega)$ by $(m,v)_n=\sum^n_{i=1}w_{i}^{2}m_iv(x_i)$, with weight $ w_{i}^{2}=\mathcal{O}\left(\frac{1}{n}\right)$. Moreover, we represent the inner product between two functions as $(u,v)_n=\sum^n_{i=1}w_{i}^{2}u(x_i)v(x_i)$ for any $u,v\in C(\bar\Omega)$, and the empirical norm $\|u\|_n=\big(\sum_{i=1}^{n} w_{i}^{2}u^2(x_i)\big)^{1/2}$ for any $u\in C(\bar\Omega)$.

With these definitions in place, we formulate the backward heat conduction problem as recovering the unknown initial condition $f^*$ from the noisy final time measurement data $m_i=\mathcal{F}_Tf^*(x_i)+e_i$, $i=1,...,n$. To solve this backward problem, we employ a regularization method. Specifically, we seek an approximate solution of the true initial condition $f^*$ by solving the following least-squares regularized minimization problem: 
\begin{align}\label{para-ori}
\mathop {\rm min}\limits_{f\in X}         \|\mathcal{F}_Tf-m\|^2_n+\lambda \|f\|_{L^2(\Omega)}^2\,,
\end{align}
where $\lambda$ is the regularization parameter. Two critical issues arise when implementing this method: selecting the optimal regularization parameter and the converges in probability space. In the next section, we will present theoretical results addressing stochastic convergence.

\section{Stochastic convergence of the inverse problem}\label{sec:analysis}
To show the stochastic convergence of the regularization method \eqref{para-ori}, we first revisit an important property of the eigenvalue distribution for the elliptic operator $\mathcal{L}$, as presented in \cite{agmon,Fleckinger}. 

% \begin{align}
% L=-\nabla \cdot(a \nabla \cdot)+C\cdot.
% \end{align}

\begin{prop}\label{para-lem:2.1} Suppose $\Omega$ is a bounded domain in $\mathbb{R}^d$ and $a, c\in C^0(\bar{\Omega})$, $c\geq 0$, then the eigenvalue problem
\begin{align}\label{yy2}
\mathcal{L}\psi =\mu\,\psi~~ \text{with} ~~ \psi_{\partial\Omega}=0
\end{align}

has a countable set of positive eigenvalues $\mu_1\le\mu_2\le\cdots$,  with its corresponding eigenfunctions 
$\{\phi_k\}_{k=1}^\infty$ forming an orthogonal basis of $L^2(\Omega)$. 
Moreover, there exist constants $C_1,C_2>0$ such that 
$C_1 k^{2/d}\le \mu_k\le C_2k^{2/d}$ for all $k=1,2,\cdots.$
\end{prop}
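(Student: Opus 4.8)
The plan is to treat this as a standard consequence of spectral theory for compact self-adjoint operators, combined with a min-max comparison against the Dirichlet Laplacian. First I would record that, by uniform ellipticity ($a\ge a_{\min}>0$) and $c\ge 0$, the bilinear form $B(u,v)=\int_\Omega a\,\nabla u\cdot\nabla v+\int_\Omega c\,uv$ is bounded and coercive on $H_0^1(\Omega)$: coercivity follows from $B(u,u)\ge a_{\min}\|\nabla u\|_{L^2(\Omega)}^2$ together with the Poincar\'e inequality (valid since $\Omega$ is bounded), and boundedness from $a,c\in C^0(\bar\Omega)$. By Lax--Milgram, $\mathcal{L}_0^{-1}:L^2(\Omega)\to H_0^1(\Omega)$ is well defined and bounded; composing with the Rellich--Kondrachov compact embedding $H_0^1(\Omega)\hookrightarrow\hookrightarrow L^2(\Omega)$ (which holds because $\Omega$ is bounded), $\mathcal{L}_0^{-1}$ is a compact, self-adjoint, positive operator on $L^2(\Omega)$. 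The spectral theorem then yields a nonincreasing null sequence of positive eigenvalues of $\mathcal{L}_0^{-1}$, whose reciprocals $0<\mu_1\le\mu_2\le\cdots\to\infty$ are the eigenvalues of \eqref{yy2}, with the corresponding eigenfunctions $\{\phi_k\}$ forming an orthogonal basis of $L^2(\Omega)$ (orthonormal after normalization). Positivity of each $\mu_k$ is immediate from coercivity via the Rayleigh quotient, $\mu_k=B(\phi_k,\phi_k)/\|\phi_k\|_{L^2(\Omega)}^2\ge a_{\min}\|\nabla\phi_k\|_{L^2(\Omega)}^2/\|\phi_k\|_{L^2(\Omega)}^2>0$.

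For the two-sided bound $C_1k^{2/d}\le\mu_k\le C_2k^{2/d}$, I would use the Courant--Fischer min-max characterization
\[
\mu_k=\min_{\substack{V\subset H_0^1(\Omega)\\ \dim V=k}}\ \max_{0\ne u\in V}\ \frac{B(u,u)}{\|u\|_{L^2(\Omega)}^2}.
\]
Combining the ellipticity bounds $a_{\min}\|\nabla u\|_{L^2(\Omega)}^2\le B(u,u)\le a_{\max}\|\nabla u\|_{L^2(\Omega)}^2+\|c\|_{L^\infty(\Omega)}\|u\|_{L^2(\Omega)}^2$ with Poincar\'e's inequality, the Rayleigh quotient of $B$ is pinched between fixed constant multiples of the Rayleigh quotient of $-\Delta$, so $a_{\min}\lambda_k\le\mu_k\le C\lambda_k$, where $\lambda_k=\lambda_k(\Omega)$ is the $k$-th Dirichlet eigenvalue of $-\Delta$ on $\Omega$. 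It then suffices to prove $c_1k^{2/d}\le\lambda_k\le c_2k^{2/d}$, which I would obtain by domain monotonicity: enclosing $\Omega$ in a cube $Q\supset\Omega$ and inscribing a small cube $q\subset\Omega$ gives, via zero extension of test functions, $\lambda_k(Q)\le\lambda_k(\Omega)\le\lambda_k(q)$; since the Dirichlet spectrum of a cube of side $\ell$ is explicit, the eigenvalue counting function satisfies $N(\lambda)$ comparable to $(\ell^2\lambda)^{d/2}$ for large $\lambda$ (by counting lattice points in a Euclidean ball), and inverting this relation yields $\lambda_k(Q)\gtrsim k^{2/d}$ and $\lambda_k(q)\lesssim k^{2/d}$. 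Alternatively one may simply invoke the classical Weyl-type estimates of \cite{agmon,Fleckinger}.

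The only point needing genuine care is the lattice-point count behind the cube estimate, where one controls the discrepancy between $N(\lambda)$ and the volume of a ball of radius $\sqrt\lambda$; this is elementary but is the single quantitative step, and it is precisely where the exponent $2/d$ (and hence the dimension) enters. Everything else is soft functional analysis. Since the statement is classical, in the write-up I expect to present the spectral-theory part concisely and handle the eigenvalue bounds by the reduction to the Laplacian above together with a reference, rather than reproving Weyl's law.
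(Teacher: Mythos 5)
The paper does not prove this proposition at all: it is imported verbatim as a known result, with the eigenvalue asymptotics attributed to \cite{agmon,Fleckinger}, so there is no in-paper argument to compare yours against. Your sketch is a correct, self-contained route to the statement. The functional-analytic half (coercivity of the form via $a\ge a_{\min}$, $c\ge 0$ and Poincar\'e; Lax--Milgram; compactness of $\mathcal{L}_0^{-1}$ from Rellich--Kondrachov, which for $H_0^1(\Omega)$ needs no boundary regularity beyond boundedness of $\Omega$; spectral theorem for the compact, injective, self-adjoint, positive resolvent) correctly delivers the countable positive spectrum and the orthogonal $L^2$ basis. The quantitative half is also sound: Courant--Fischer pinches the Rayleigh quotient of $B$ between $a_{\min}$ and $a_{\max}+\|c\|_{L^\infty}C_P$ times that of $-\Delta$, domain monotonicity brackets $\lambda_k(\Omega)$ between the Dirichlet eigenvalues of an enclosing and an inscribed cube, and the explicit cube spectrum plus the lattice-point count gives the order $k^{2/d}$; the only caveat is that the counting argument is asymptotic in $\lambda$, so to get the two-sided bound for \emph{all} $k\ge 1$ you should note that the finitely many initial eigenvalues are positive and can be absorbed into the constants $C_1,C_2$. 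Your closing remark that one may simply cite the classical Weyl-type estimates is in fact exactly what the authors do.
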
 

Using Proposition \ref{para-lem:2.1}, we can derive the spectral property of the forward operator $\mathcal{F}_T$. Specifically, we consider $\mathcal{F}_{T} f=u(\cdot, T)$, where $\mathcal{F}_T$ is the parabolic operator defined on $\Omega \subset R^{d}$ as follows:

\begin{align}\label{forwardoperator}
 \begin{cases}u_{t}+\mathcal{L} u=0 & \Omega \times(0, T) \\
u(x,t)=0 & \partial \Omega \times(0, T) \\
u(x, 0)=f(x) & \Omega.\end{cases} 
\end{align}
Specifically, we have the following lemma with respect to the eigenvalue distribution of the forward operator $\mathcal{F}_T$. 
\begin{lemma}\label{para-lem2.1-eigen}
The eigenvalue problem
\begin{align}\label{yy3}
(\psi,v)=\rho (\mathcal{F}_T\psi ,\mathcal{F}_Tv) \q \forall\, v\in L^2(\Omega)
\end{align}

has a countable set of positive eigenvalues $0<\rho_1\le\rho_2\le\cdots$ and the corresponding eigenfunctions are the same $\phi_k$ as in Proposition \ref{para-lem:2.1}. Moreover, $\rho_k=O(e^{2Tk^{2/d}})$ for all $k=1,2,\cdots$.
%$\rho_k=O(e^{2Tk^{2/d}})\geq Ck^{4/d}$ for all $k=1,2,\cdots$.
\end{lemma}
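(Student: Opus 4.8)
The plan is to diagonalize the forward operator $\mathcal{F}_T$ against the spectral basis supplied by Proposition \ref{para-lem:2.1} and then read off the generalized spectrum from that of $\mathcal{F}_T^2$. First I would record that, since $\mathcal{L}$ with zero Dirichlet data is self-adjoint with the orthogonal eigenbasis $\{\phi_k\}_{k=1}^\infty$ of $L^2(\Omega)$ satisfying $\mathcal{L}_0\phi_k=\mu_k\phi_k$, the analytic semigroup $\mathcal{F}_T=e^{-\mathcal{L}_0 T}$ acts on each mode by $\mathcal{F}_T\phi_k=e^{-\mu_k T}\phi_k$ (justified by the spectral representation of $e^{-\mathcal{L}_0 T}$, not merely termwise); in particular $\mathcal{F}_T$ is a bounded, injective, self-adjoint, positive and compact operator on $L^2(\Omega)$ whose range lies in $H^2(\Omega)$, so all pairings in \eqref{yy3} are well defined.

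Second, I would rewrite the variational identity \eqref{yy3} using self-adjointness: $(\mathcal{F}_T\psi,\mathcal{F}_T v)=(\mathcal{F}_T^2\psi,v)$ for every $v\in L^2(\Omega)$, so \eqref{yy3} is equivalent to $\psi=\rho\,\mathcal{F}_T^2\psi$, i.e. $\mathcal{F}_T^2\psi=\rho^{-1}\psi$. Hence $\{\rho_k^{-1}\}$ is exactly the set of eigenvalues of the compact, self-adjoint, positive operator $\mathcal{F}_T^2=e^{-2\mathcal{L}_0 T}$, with the same eigenfunctions. Since $\mathcal{F}_T^2\phi_k=e^{-2\mu_k T}\phi_k$ and $\{\phi_k\}$ is complete in $L^2(\Omega)$, the eigenpairs of \eqref{yy3} are precisely $\rho_k=e^{2\mu_k T}$ with eigenfunctions $\phi_k$; positivity and the ordering $0<\rho_1\le\rho_2\le\cdots$ follow from $0<\mu_1\le\mu_2\le\cdots$, and $\rho_k\ge e^{2\mu_1 T}>0$ rules out accumulation at $0$ (so the generalized eigenvalues indeed form a countable set diverging to $+\infty$).

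Finally, I would invoke the two-sided bound $C_1 k^{2/d}\le\mu_k\le C_2 k^{2/d}$ from Proposition \ref{para-lem:2.1} to obtain $e^{2TC_1 k^{2/d}}\le\rho_k\le e^{2TC_2 k^{2/d}}$, which in the $O(\cdot)$-notation used here (absorbing the exponent constant) reads $\rho_k=O(e^{2Tk^{2/d}})$; this is the quantitative statement of the severe, exponential ill-conditioning of the BHCP. The only genuinely delicate point is the completeness/no-other-eigenvalues step: one must check that reducing \eqref{yy3} to an eigenvalue problem for the bounded operator $\mathcal{F}_T^2$ loses no eigenpairs — it does not, since injectivity of $\mathcal{F}_T$ forbids $\rho=0$ and $\{\phi_k\}$ spans $L^2(\Omega)$ — and that the action $\mathcal{F}_T\phi_k=e^{-\mu_k T}\phi_k$ is legitimately deduced from the analytic semigroup calculus. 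Everything else is bookkeeping with the eigenexpansion.
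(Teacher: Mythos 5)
Your proposal is correct and follows essentially the same route as the paper: diagonalize $\mathcal{F}_T$ in the eigenbasis $\{\phi_k\}$ of $\mathcal{L}_0$ to get $\mathcal{F}_T\phi_k=e^{-\mu_k T}\phi_k$ (the paper does this by solving the decoupled ODEs, you by the semigroup functional calculus — same content), and then read off $\rho_k=e^{2\mu_k T}$ and apply the Weyl-type bounds on $\mu_k$. If anything, your explicit reduction of \eqref{yy3} to $\mathcal{F}_T^2\psi=\rho^{-1}\psi$ via self-adjointness of $\mathcal{F}_T$ makes precise the step the paper leaves implicit when it passes from the auxiliary problem $\psi=\eta\,\mathcal{F}_T\psi$ to the squared eigenvalues.
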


\begin{proof} Firstly, we consider the eigenvalue problem
\begin{align}\label{para-exameigenS2}
\psi=\eta\, \mathcal{F}_T\psi.
\end{align}
Let $\{\phi_k\}_{k=1}^\infty$ be eigenfunctions of the problem \eqref{yy2} that form an orthogonal basis of $L^2(\Omega)$. We write $f=\sum_{k=1}^\infty f_k \phi_k$ with a set of coefficients $f_k$. Let $u=\sum_{k=1}^\infty u_k(t) \phi_k$ be the solution of the problem \eqref{zz0}. By plugging both expressions of $f$ and $u$ into the first equation of \eqref{zz0} and noting the fact that $\mathcal{L}\phi_k=\mu_k\phi_k$, we can compare the coefficients of $\phi_k$  on both sides of the equation to obtain the following decoupled ordinary differential equations:
  
\begin{eqnarray*}
u'_k(t) + \mu_k u_k&=&0\ \ \ \ \mbox{in } ~(0, T)\,\\
u_k(0)&=&f_k.
\end{eqnarray*}
Solve this simple ordinary differential equation, $u_k(T)=e^{-\mu_kT}f_k$. Noting that $\mathcal{F}_Tf=u(\cdot, T)=\sum^\infty_{k=1} u_k(T) \phi_k$, we can formally write 
\begin{align}
 \mathcal{F}_T\Big(\sum_{k=1}^\infty f_k \phi_k\Big)=  \sum_{k=1}^\infty e^{-\mu_kT} f_k \phi_k.
\end{align}
Since $\{\phi_k\}_{k=1}^\infty$ is an orthogonal basis of $L^2(\Omega)$, we can readily see that the eigenvalue problem \eqref{para-exameigenS2} has a countable set of positive eigenvalues $\{\eta_k\}_{k=1}^\infty$, with $\{\phi_k\}_{k=1}^\infty$ being their corresponding eigenfunctions. By Proposition \ref{para-lem:2.1}, we have $\eta_k=e^{\mu_kT}=O(e^{Tk^{2/d}})$. Therefore, the eigenvalue problem \eqref{yy3} has a countable set of eigenvalues $\{\rho_k\}_{k=1}^\infty$ that satisfy $\rho_k=O(e^{2Tk^{2/d}})$. 
This completes the proof.  
\end{proof}  

% properties: 
% \begin{enumerate}
%     \item eigenvalues of $L$ with zero boundary $\mu_{1} \leqslant \mu_{2} \leqslant \cdots$
% eigenfunctions of $L$ with zero boundary $\phi_{1}, \phi_{2} \ldots$ form an orthornormal basis of $L^{2}(\Omega)$
% \item $\left\|S_{T} f\right\|_{ H^{2}(\Omega)} \leqslant C_{T} \| f\|_{ L^2(\Omega)}$
% \end{enumerate}

We also need the following estimate regarding the covering entropy of Sobolev space \cite{Birman}, which plays an important role in this paper. For the completeness of this paper, we present the definition of the covering number. For a semi-metric space $(V,d)$ and any $\varepsilon>0$, the covering number $N(\varepsilon,BV,d)$ is the minimum number of $\varepsilon$-balls that cover the unit ball $BV$ of $V$ and $\log N(\varepsilon,V,d)$ is called the covering entropy which is an important quantity to characterize the complexity of the set $V$.

\begin{prop}\label{para-lem:2.3-entropy}
Let $Q$ be the unit cube in $\R^d$ ($d=1,2,3$) and $BH^{2}(Q)$ be the unit sphere of space 
$H^{2}(Q)$. Then it holds for sufficiently small $\varepsilon>0$ that 
\begin{align}
\log N(\varepsilon, BH^{2}(Q), \|\cdot\|_{C(Q)})\le C\varepsilon^{-d/2},
\end{align}
where $N(\varepsilon, BH^{2}(Q), \|\cdot\|_{C(Q)})$ means the covering number of Sobolev space $H^2(\Omega)$.
\end{prop}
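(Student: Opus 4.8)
The statement is the classical Birman--Solomyak entropy estimate for the embedding $H^2\hookrightarrow C$ \cite{Birman}, and the approach I would take is adaptive piecewise-polynomial approximation. First I would record the local estimate: since $d\le 3$, $H^2(Q)\hookrightarrow C(Q)$, and a Deny--Lions / Bramble--Hilbert argument (rescale to the unit cube, use that the $H^2$-seminorm annihilates affine functions, then invoke the embedding) shows that for every dyadic subcube $R\subset Q$ of side $\ell_R$ and every $g\in H^2(R)$ there is an affine function $p_R$ with
\[
\|g-p_R\|_{C(R)}\;\lesssim\;\ell_R^{\,2-d/2}\,|g|_{H^2(R)},
\]
the constant depending only on $d$.

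Given $\varepsilon>0$ and $f\in BH^2(Q)$, I would then build an \emph{adaptive} dyadic partition $\mathcal P_f$: starting from $Q$, recursively bisect any cube $R$ with $\ell_R^{\,2-d/2}|f|_{H^2(R)}>c\varepsilon$ and stop otherwise (termination follows from absolute continuity of $R\mapsto|f|_{H^2(R)}$), and let $\Pi_f$ equal $p_R$ on each leaf $R$. By construction $\|f-\Pi_f\|_{C(Q)}\lesssim\varepsilon$. The crux is a counting bound that is \emph{uniform} over $f\in BH^2(Q)$: a level-$l$ leaf can occur only below a level-$(l-1)$ cube $\hat R$ with $|f|_{H^2(\hat R)}^2\gtrsim\varepsilon^2 2^{l(4-d)}$, and since $\sum_{\hat R}|f|_{H^2(\hat R)}^2\le|f|_{H^2(Q)}^2\le1$ there are at most $C\varepsilon^{-2}2^{-l(4-d)}$ such cubes; combined with the trivial bound $2^{ld}$, the number of level-$l$ leaves is $\lesssim\min(2^{ld},\,\varepsilon^{-2}2^{-l(4-d)})$. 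The two bounds cross at $2^{l}\asymp\varepsilon^{-1/2}$, and summing the geometric series on either side gives $\#\mathcal P_f\lesssim\varepsilon^{-d/2}$.

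Finally I would convert this into the entropy bound by encoding $\Pi_f$: the underlying $2^d$-ary tree is one of $\exp(O(\#\mathcal P_f))=\exp(O(\varepsilon^{-d/2}))$ possible shapes, and on each leaf the (uniformly bounded) affine coefficients need only be quantized to an accuracy matched to $\ell_R$, which a level-by-level count shows costs another $O(\varepsilon^{-d/2})$ bits. The resulting quantized approximants form an $\varepsilon$-net of $BH^2(Q)$ in $\|\cdot\|_{C(Q)}$ of cardinality $\exp(O(\varepsilon^{-d/2}))$, i.e. $\log N(\varepsilon,BH^2(Q),\|\cdot\|_{C(Q)})\lesssim\varepsilon^{-d/2}$.

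The real obstacle is the counting step $\#\mathcal P_f\lesssim\varepsilon^{-d/2}$. A \emph{uniform} mesh of side $h\asymp\varepsilon^{2/(4-d)}$ (the coarsest one making the local error $\lesssim\varepsilon$) already has $h^{-d}\asymp\varepsilon^{-2d/(4-d)}\gg\varepsilon^{-d/2}$ cells, so the improvement comes entirely from adaptivity --- refining only where $|f|_{H^2}$ concentrates --- together with the global energy constraint $|f|_{H^2(Q)}\le1$. Landing exactly on the exponent $d/2$ without a stray logarithmic factor also requires some care in the final tree-encoding and in the per-leaf quantization bookkeeping, but that part is routine once the uniform counting bound is in hand.
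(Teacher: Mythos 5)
The paper gives no proof of this proposition at all: it is quoted as a known result and attributed to Birman--Solomyak \cite{Birman}, whose 1967 paper on piecewise polynomial approximation of Sobolev classes is precisely where this entropy bound comes from. Your proposal is therefore not an alternative to the paper's argument but a reconstruction of the cited classical one, and as a reconstruction it is essentially right: the scaling $\|g-p_R\|_{C(R)}\lesssim \ell_R^{2-d/2}|g|_{H^2(R)}$ is correct (this is where $d\le 3$ enters, both for $H^2\hookrightarrow C$ and for the exponent $2-d/2$ to be positive so that refinement terminates), the stopping rule gives uniform accuracy $\varepsilon$, and the level-by-level count $\min\bigl(2^{ld},\,\varepsilon^{-2}2^{-l(4-d)}\bigr)$ with crossover at $2^{l}\asymp\varepsilon^{-1/2}$ correctly yields $\#\mathcal P_f\lesssim\varepsilon^{-d/2}$ leaves; the superadditivity $\sum_{\hat R}|f|^2_{H^2(\hat R)}\le 1$ over disjoint refined cubes is exactly the right global constraint.

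The one place you should not call ``routine'' is the final quantization. A naive net for the $d+1$ affine coefficients on each leaf, to the accuracy needed for an $\varepsilon$-approximation in $C(R)$, costs $\log(1/\varepsilon)$ bits per leaf and produces the bound $\varepsilon^{-d/2}\log(1/\varepsilon)$, not $\varepsilon^{-d/2}$. Removing the logarithm is the genuinely delicate part of Birman--Solomyak: one must quantize hierarchically, encoding on each leaf only the increment $p_R-p_{\hat R}$ relative to its parent with a step size tied to the local tolerance $\ell_{\hat R}^{2-d/2}|f|_{H^2(\hat R)}$, so that the number of admissible symbols per node is $O(1)$ and the total cost stays proportional to the number of nodes. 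You have correctly identified that this is where the care is needed, but the mechanism (relative, scale-adapted quantization rather than absolute quantization) deserves to be stated, since without it the claimed exponent is not actually reached. With that caveat, your sketch is a faithful account of the argument the paper delegates to the literature.
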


With the above two lemmas, we will prove the following stochastic convergence of the inverse problem \eqref{para-ori} based on the framework developed in \cite{Chen-Zhang2022}. A random variable $X$ is sub-Gaussian with parameter $\sigma$ if it satisfies
\begin{align}\label{sub-gau-def}
{E}\left[e^{z(X-{E}[X])}\right]\le e^{\frac 12\sigma^2z^2}\ \ \ \ \forall z\in\R.
\end{align}

The probability distribution function of a sub-Gaussian random variable has an exponentially decaying tail, which is, if $X$ is a sub-Gaussian random variable, then
\begin{align}\label{exp-tail}
{P}(|X-{E} [X]|\geq z)\leq 2e^{-\frac 12 z^2/\sigma^2}\ \ \forall z>0. 
\end{align}

\begin{theorem}
    \label{thm:convergence} Let $f^*$ be the ground truth of observation $m_i=\mathcal{F}_Tf^*(x_i)+e_i$, $i=1, \cdots, n$, $f_{n}$ is the solution of the following problem: 
\begin{align}
    \min _{f \in L^{2}(\Omega)}\left\|\mathcal{F}_{T} f-m\right\|_{n}^{2}+\lambda\|f\|_{L^2(\Omega)}^{2}. \label{BoundEstimate0}
\end{align} 
For the first case, if $e_i$ are independent random variables with zero expectations and bounded variance $\sigma^2$, then,
\begin{align}
    E\left\|\mathcal{F}_{T} f_{n}-\mathcal{F}_{T} f^{*}\right\|_{n}^{2} \leqslant C \lambda\left\|f^{*}\right\|^{2}+C \frac{\sigma^{2}}{n \lambda^{d / 4}}, \label{BoundEstimate1}\\
    E\left\|f_{n}-f^{*}\right\|_{L^2(\Omega)}^{2} \leqslant C\left\|f^{*}\right\|_{L^2(\Omega)}^{2}+C \frac{\sigma^{2}}{n \lambda^{1+d/4}}.\label{BoundEstimate2}
\end{align}
 For the second case, if $e_i$ are sub-Gaussian random variables with parameter $\sigma$ and zero expectations, and choose $\lambda^{1 / 2+d / 8}=\mathcal{O}( \sigma n^{-1 / 2}\rho_0)$  with $\rho_0^{-1}=\left\|f^{*}\right\|_{L^2(\Omega)}+\sigma n^{-1/2}$, then,
\begin{align}
P\left(\left\|\mathcal{F}_{T} f_{n}-\mathcal{F}_{T} f^{*}\right\|_{n} \geqslant \lambda^{1 / 2} \rho_{0} z\right) \leq 2 e^{-C z^{2}} ,\label{BoundEstimate3}\\
P\left(\left\|f^{*}-f_{n}\right\|_{L^2(\Omega)} \geqslant \rho_{0} z\right) \leq 2 e^{-C z^{2}}.\label{BoundEstimate4}
\end{align}
\end{theorem}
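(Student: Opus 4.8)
The plan is to follow the standard variational framework for regularized least-squares with random noise, adapting the tools developed in \cite{Chen-Zhang2022} to the severely ill-posed spectral decay established in Lemma \ref{para-lem2.1-eigen}. First I would exploit the minimizing property of $f_n$: since $f_n$ minimizes \eqref{BoundEstimate0}, comparing its objective value with that of the true solution $f^*$ gives
\begin{align}
\|\mathcal{F}_T f_n - m\|_n^2 + \lambda \|f_n\|_{L^2(\Omega)}^2 \le \|\mathcal{F}_T f^* - m\|_n^2 + \lambda \|f^*\|_{L^2(\Omega)}^2.
\end{align}
Writing $m_i = \mathcal{F}_T f^*(x_i) + e_i$ and expanding both empirical norms, the $\|e\|_n^2$ terms cancel and one obtains the basic inequality
\begin{align}
\|\mathcal{F}_T f_n - \mathcal{F}_T f^*\|_n^2 + \lambda \|f_n\|_{L^2(\Omega)}^2 \le 2\,(e, \mathcal{F}_T f_n - \mathcal{F}_T f^*)_n + \lambda \|f^*\|_{L^2(\Omega)}^2,
\end{align}
where $(e,\cdot)_n$ denotes the empirical inner product against the noise. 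This reduces everything to controlling the stochastic term $(e, \mathcal{F}_T(f_n - f^*))_n$ uniformly over the (data-dependent) function $f_n$.

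The heart of the argument is an empirical-process bound on $\sup_{g} (e,g)_n / (\text{norm of } g)$ where $g$ ranges over images $\mathcal{F}_T f$. Here I would use two ingredients. First, by Lemma \ref{para-lem2.1-eigen} the operator $\mathcal{F}_T$ maps $L^2(\Omega)$ into a space whose elements, expanded in the eigenbasis $\{\phi_k\}$, have coefficients decaying like $e^{-\mu_k T}$; combined with the elliptic regularity estimate $\|\mathcal{F}_T f\|_{H^2(\Omega)} \le C_T \|f\|_{L^2(\Omega)}$ from Section 2, the relevant function class sits inside a ball of $H^2(\Omega)$ (after rescaling). Second, Proposition \ref{para-lem:2.3-entropy} gives the covering entropy bound $\log N(\varepsilon, BH^2(Q), \|\cdot\|_{C(Q)}) \le C\varepsilon^{-d/2}$. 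Feeding this entropy estimate into a chaining / Dudley-type bound for the sub-Gaussian (resp. bounded-variance) process $(e,\cdot)_n$ — using that $w_i^2 = \mathcal{O}(1/n)$ so the process has variance proxy $\sigma^2/n$ times the empirical norm squared — yields, for the bounded-variance case, a bound of the form $E\big[(e, \mathcal{F}_T(f_n-f^*))_n\big] \lesssim \frac{\sigma}{\sqrt n}\, \|\mathcal{F}_T(f_n-f^*)\|_n^{1-d/8}\,(\text{weak norm})^{d/8}$ or, after a Cauchy–Schwarz/Young split, $\le \tfrac12 \|\mathcal{F}_T(f_n-f^*)\|_n^2 + C\frac{\sigma^2}{n\lambda^{d/4}}$, absorbing the quadratic part into the left side of the basic inequality. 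The exponent $d/4$ on $\lambda$ traces back precisely to the $\varepsilon^{-d/2}$ entropy exponent in Proposition \ref{para-lem:2.3-entropy}, and the extra factor $\lambda^{-1}$ in \eqref{BoundEstimate2} comes from converting the fidelity-norm control of $\mathcal{F}_T(f_n-f^*)$ plus the penalty $\lambda\|f_n\|^2$ back into an $L^2$ bound on $f_n - f^*$ itself.

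For \eqref{BoundEstimate1}–\eqref{BoundEstimate2}, taking expectations throughout and rearranging gives the stated estimates directly. For the sub-Gaussian case \eqref{BoundEstimate3}–\eqref{BoundEstimate4}, rather than taking expectations I would keep the deviation form: the chained empirical process is itself sub-Gaussian, so with probability at least $1 - 2e^{-Cz^2}$ the stochastic term is bounded by $\frac{\sigma}{\sqrt n} z \cdot (\text{the relevant norms})$; inserting the specified balance choice $\lambda^{1/2+d/8} = \mathcal{O}(\sigma n^{-1/2}\rho_0)$ with $\rho_0^{-1} = \|f^*\|_{L^2(\Omega)} + \sigma n^{-1/2}$ equalizes the deterministic bias term $\lambda\|f^*\|^2$ and the stochastic fluctuation term, producing the clean bounds $\|\mathcal{F}_T f_n - \mathcal{F}_T f^*\|_n \lesssim \lambda^{1/2}\rho_0 z$ and $\|f_n - f^*\|_{L^2(\Omega)} \lesssim \rho_0 z$ on that event.

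I expect the main obstacle to be the empirical-process / concentration step: making rigorous the peeling argument that handles the data-dependence of $f_n$ (one typically slices the parameter space into shells where $\|f_n - f^*\|$ is of dyadic size and applies a union bound), and correctly tracking how the exponential eigenvalue decay of $\mathcal{F}_T$ interacts with the $H^2$ entropy bound so that the effective complexity — and hence the power of $\lambda$ — comes out as $\lambda^{-d/4}$. A secondary subtlety is justifying that the comparison with $BH^2(Q)$ (a cube) transfers to the domain $\Omega$ via the uniform cone condition and a partition/extension argument, and ensuring the constant $C_T$ dependence on $T$ is carried through, which is what ultimately forces the regularization parameter to depend on $T$ as the authors emphasize. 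The remaining algebra — expanding the basic inequality, applying Young's inequality, and optimizing in $\lambda$ — is routine.
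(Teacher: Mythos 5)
Your proposal is correct in outline, but it takes a visibly different route from the paper: the paper's entire proof of Theorem \ref{thm:convergence} consists of verifying two hypotheses --- that the eigenvalues of \eqref{yy3} grow at least like $Ck^{4/d}$ (immediate from Lemma \ref{para-lem2.1-eigen}, since $e^{\mu_k T}$ dominates any polynomial) and that the covering entropy of $BH^2$ has exponent $d/2$ (Proposition \ref{para-lem:2.3-entropy}) --- and then citing Theorems 2.3 and 2.8 of \cite{Chen-Zhang2022} verbatim. What you have written is essentially a reconstruction of the proofs \emph{inside} that reference: the basic inequality from the minimizing property, the reduction to the stochastic term $(e,\mathcal{F}_T(f_n-f^*))_n$, and the entropy-based deviation bound with peeling for the sub-Gaussian case. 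That reconstruction buys self-containedness, and it correctly identifies where the $\lambda^{-d/4}$ and the extra $\lambda^{-1}$ in \eqref{BoundEstimate2} come from; the paper's citation buys brevity at the cost of hiding the mechanism.

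Two points deserve correction. First, for the bounded-variance case \eqref{BoundEstimate1}--\eqref{BoundEstimate2} a Dudley/chaining bound is not available: with only $E[e_i^2]\le\sigma^2$ you have no control on the increments of the empirical process beyond second moments, so the entropy route fails there. The correct tool (and the one used in \cite{Chen-Zhang2022}, reproduced in this paper's Lemma \ref{lemma3} for the time-$t$ version) is the spectral one: expand in the eigenbasis of the discrete problem $(\psi,v)=\beta(\mathcal{F}_T\psi,\mathcal{F}_Tv)_n$, apply Cauchy--Schwarz with weights $(1+\lambda\beta_k)^{\pm 1}$, take expectations termwise, and use $\sum_k(1+\lambda\beta_k)^{-1}\le C\lambda^{-d/4}$ from $\beta_k\ge Ck^{4/d}$. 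The entropy/chaining machinery is reserved for the sub-Gaussian case \eqref{BoundEstimate3}--\eqref{BoundEstimate4}, which is exactly why the theorem's two cases rest on two different lemmas. Second, a minor slip: with entropy exponent $\gamma=d/2$, Proposition \ref{Geer-entropy} gives the interpolation exponents $\|g\|_n^{1-d/4}\|g\|_{H^2}^{d/4}$, not $1-d/8$ and $d/8$; your final powers of $\lambda$ are nevertheless consistent with the correct exponents, so this does not propagate.
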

\begin{proof}
For the first case, with Proposition \ref{para-lem2.1-eigen}, the eigenvalue distribution of \eqref{yy3} satisfies that $\eta_k\geq Ck^{4/d}$. The estimates \eqref{BoundEstimate1} and \eqref{BoundEstimate2} follow from Theorem 2.3 in \cite{Chen-Zhang2022}.

For the second case, with Proposition \ref{para-lem:2.3-entropy}, the entropy number of space $H^2(\Omega)$ has the order $\varepsilon^{-d/2}$ . The estimates \eqref{BoundEstimate3} and \eqref{BoundEstimate4} follow from Theorem 2.8 in \cite{Chen-Zhang2022}.
\end{proof}

Balance the two terms of the right hand side in \eqref{BoundEstimate1}-\eqref{BoundEstimate2}, the optimal choice $\lambda$ has the following form,
\begin{align}\label{optimal_lambda}
    \lambda_{optimal}^{1 / 2+d / 8}=\mathcal{O}( \sigma n^{-1 / 2}\left\|f^{*}\right\|_{L^2(\Omega)}^{-1}).
\end{align}

The theorem above only gives the stochastic convergence at the final time, while the convergence rate for the time $0<t<T$ should also be considered in BHCP. Next, we start to prove the main theorem of this paper to prove this convergence. Before proving this main theorem, we show two important estimates related to the forward operator $\mathcal{F}_{t}$.

\begin{lemma}\label{lemma:Kt}
Denote the operator $K_t=\mathcal{F}_{t}\left(\mathcal{F}_{T}^{*} \mathcal{F}_{T}+\lambda I\right)^{-1}$, 
where $\mathcal{F}_{t}$ is the forward operator defined in the equation \eqref{forwardoperator}. Then, we have the estimate $\left\|K_t\right\|^{2} \leqslant \frac{1}{\lambda^{2-t / T}}$.
\end{lemma}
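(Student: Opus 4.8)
The plan is to diagonalize everything in the orthonormal eigenbasis $\{\phi_k\}$ of $\mathcal{L}_0$ and reduce the operator-norm bound to a scalar maximization over the eigenvalues. Recall from the proof of Lemma~\ref{para-lem2.1-eigen} that $\mathcal{F}_t\phi_k = e^{-\mu_k t}\phi_k$, so $\mathcal{F}_T^*\mathcal{F}_T$ acts as multiplication by $e^{-2\mu_k T}$ on the $k$-th mode, and $(\mathcal{F}_T^*\mathcal{F}_T+\lambda I)^{-1}$ acts as multiplication by $(e^{-2\mu_k T}+\lambda)^{-1}$. Composing with $\mathcal{F}_t$, the operator $K_t$ is diagonal with eigenvalues
\begin{align}
\kappa_k = \frac{e^{-\mu_k t}}{e^{-2\mu_k T}+\lambda}.
\end{align}
Since $K_t$ is diagonal in an orthonormal basis, $\|K_t\|^2 = \sup_k \kappa_k^2$, so it suffices to show $\kappa_k^2 \le \lambda^{-(2-t/T)}$, i.e. $\kappa_k \le \lambda^{-(1-t/(2T))}$, uniformly in $k$.

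The key step is the scalar inequality: for every $s>0$ (playing the role of $e^{-\mu_k T}\in(0,1)$) and $\theta = t/T \in [0,1]$,
\begin{align}
\frac{s^{\theta}}{s^{2}+\lambda} \le \lambda^{-(1-\theta/2)}.
\end{align}
The natural way to see this is a weighted AM--GM / Young's inequality argument on the denominator: write $s^2 + \lambda$ as a convex combination that dominates $s^{\theta}\lambda^{1-\theta/2}$ up to a constant. Concretely, by Young's inequality $s^{\theta}\lambda^{1-\theta/2} = (s^2)^{\theta/2}\,\lambda^{1-\theta/2} \le \tfrac{\theta}{2}s^2 + (1-\tfrac{\theta}{2})\lambda \le s^2+\lambda$, since $\tfrac{\theta}{2}+(1-\tfrac{\theta}{2})=1$ and both coefficients are nonnegative. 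Rearranging gives exactly $s^{\theta}/(s^2+\lambda)\le \lambda^{-(1-\theta/2)}$. Substituting $s=e^{-\mu_k T}$, $\theta=t/T$, and noting $s^{\theta}=e^{-\mu_k t}$ and $s^2=e^{-2\mu_k T}$ yields $\kappa_k \le \lambda^{-(1-t/(2T))}$ for all $k$, hence $\|K_t\|^2 \le \lambda^{-(2-t/T)}$.

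I expect the main (minor) obstacle to be bookkeeping rather than mathematics: one must be careful that $\mathcal{F}_t$ and $\mathcal{F}_T$ share the same eigenbasis and that $\mathcal{F}_T^*\mathcal{F}_T$ is genuinely self-adjoint and positive so that the functional calculus applies and $\|K_t\|$ really equals the supremum of $|\kappa_k|$; this is immediate from Proposition~\ref{para-lem:2.1} and the semigroup representation, but should be stated. One should also observe that the bound is stated without a constant, so the Young's inequality route (which gives constant exactly $1$) is preferable to any cruder estimate. Finally, note the bound degrades as $t\to T$ to $\|K_T\|^2\le\lambda^{-1}$, consistent with $\mathcal{F}_T(\mathcal{F}_T^*\mathcal{F}_T+\lambda I)^{-1}$ having norm $\le \tfrac{1}{2}\lambda^{-1/2}$, which is a useful sanity check on the exponent.
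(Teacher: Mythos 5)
Your proposal is correct and follows essentially the same route as the paper: diagonalize $K_t$ in the eigenbasis $\{\phi_k\}$ and reduce the operator bound to the scalar estimate $e^{-\mu_k t}/(e^{-2\mu_k T}+\lambda)\le\lambda^{-(1-t/(2T))}$. The only cosmetic difference is that you establish this scalar inequality via weighted Young/AM--GM, whereas the paper factors the denominator as $(e^{-2T\mu_i}+\lambda)^{t/T}(e^{-2T\mu_i}+\lambda)^{2-t/T}$ and bounds the two factors separately --- the same fact in a different guise.
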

\begin{proof}
For any $u \in L^{2}(\Omega)$, let $u=\sum_{i=1}^{\infty} u_{i} \phi_{i}$,
where $\phi_i$ are normalized eigenfunctions of $L$ in Proposition \ref{para-lem:2.1}. Moreover, we have $\|u\|_{L^2(\Omega)}^{2}=\sum_{i=1}^{\infty} u_{i}^{2}$. Simple calculations give us that
\begin{align}
\left\|\mathcal{F}_{t}\left(\mathcal{F}_{T}^{*} \mathcal{F}_{T}+\lambda I\right)^{-1} u\right\|^{2}&=\sum_{i=1}^{\infty} \frac{e^{-2 t \mu_{i}}}{(e^{-2T \mu_{i}}+\lambda)^2} u_{i}^{2} \nonumber\\
&=\sum_{i=1}^{\infty} \frac{e^{-2 t \mu_{i}}}{\left(e^{-2 T \mu_{i}}+\lambda\right)^{t / T}} \frac{1}{\left(e^{-2 T \mu_{i}}+\lambda\right)^{2-t / T}} u_{i}^{2} \nonumber\\
& \leqslant \sum_{i=1}^{\infty} \frac{1}{\left(e^{-2 T \mu_{i}}+\lambda\right)^{2-t / T}} u_{i}^{2} \nonumber\\
& \leqslant \frac{1}{\lambda^{2-t / T}} \sum_{i=1}^{\infty} u_{i}^{2}=\frac{1}{\lambda^{2-t / T}}\| u\|_{L^2(\Omega)}^{2}. 
\end{align}
This proves the lemma. 
\end{proof}
Using a similar approach, we can prove the following lemma. 
\begin{lemma}\label{lemma:STST}
Let $\mathcal{F}_{t}$ denote the forward operator defined in equation \eqref{forwardoperator}. Then, we have the estimate 
$\left\|\mathcal{F}_{T}\left(\mathcal{F}_{T}^{*} \mathcal{F}_{T}+\lambda I\right)^{-1} \mathcal{F}_{t}\right\|^{2} \leqslant \lambda^{t / T-1}$.
\end{lemma}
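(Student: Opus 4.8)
The plan is to mirror the spectral decomposition used in the proof of Lemma \ref{lemma:Kt}. First I would expand an arbitrary $u \in L^2(\Omega)$ in the orthonormal eigenbasis $\{\phi_i\}_{i=1}^\infty$ of $\mathcal{L}_0$ as $u = \sum_{i=1}^\infty u_i \phi_i$, so that $\|u\|_{L^2(\Omega)}^2 = \sum_{i=1}^\infty u_i^2$. Since each $\phi_i$ is an eigenfunction of $\mathcal{F}_t$ with eigenvalue $e^{-t\mu_i}$ (as established in the proof of Lemma \ref{para-lem2.1-eigen}), it is simultaneously an eigenfunction of $\mathcal{F}_T^*$, of $\mathcal{F}_T^*\mathcal{F}_T$ (eigenvalue $e^{-2T\mu_i}$), and hence of the resolvent $(\mathcal{F}_T^*\mathcal{F}_T + \lambda I)^{-1}$ (eigenvalue $(e^{-2T\mu_i}+\lambda)^{-1}$). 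Therefore the composite operator $\mathcal{F}_T(\mathcal{F}_T^*\mathcal{F}_T+\lambda I)^{-1}\mathcal{F}_t$ acts diagonally, sending $\phi_i$ to $\dfrac{e^{-T\mu_i}\,e^{-t\mu_i}}{e^{-2T\mu_i}+\lambda}\,\phi_i$, and the squared operator norm is the supremum over $i$ of the squares of these multipliers.

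Next I would bound the multiplier. Writing $a_i = e^{-2T\mu_i}$, the squared multiplier is $\dfrac{e^{-2T\mu_i}e^{-2t\mu_i}}{(e^{-2T\mu_i}+\lambda)^2} = \dfrac{a_i\, e^{-2t\mu_i}}{(a_i+\lambda)^2}$. The trick, exactly as in Lemma \ref{lemma:Kt}, is to split the denominator as $(a_i+\lambda)^2 = (a_i+\lambda)^{1+t/T}(a_i+\lambda)^{1-t/T}$. In the first factor I use $a_i + \lambda \ge a_i = e^{-2T\mu_i}$, so $(a_i+\lambda)^{1+t/T} \ge e^{-2T\mu_i(1+t/T)} = e^{-2T\mu_i}e^{-2t\mu_i}$, which cancels the numerator $a_i e^{-2t\mu_i}$ entirely. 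In the second factor I use $a_i+\lambda \ge \lambda$, giving $(a_i+\lambda)^{1-t/T} \ge \lambda^{1-t/T}$. Combining, the squared multiplier is at most $\lambda^{-(1-t/T)} = \lambda^{t/T-1}$, uniformly in $i$, which yields $\|\mathcal{F}_T(\mathcal{F}_T^*\mathcal{F}_T+\lambda I)^{-1}\mathcal{F}_t u\|^2 \le \lambda^{t/T-1}\sum_i u_i^2 = \lambda^{t/T-1}\|u\|_{L^2(\Omega)}^2$, and taking the supremum over unit $u$ gives the claim.

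I do not anticipate a serious obstacle here: the only thing to be careful about is the exponent bookkeeping in the denominator split (ensuring $1+t/T$ and $1-t/T$ sum to $2$ and that the first exponent is large enough to absorb the full numerator $e^{-2T\mu_i - 2t\mu_i}$), and the minor point that one should remark $0 \le t \le T$ so the exponent $t/T - 1 \le 0$ and the stated bound is the intended (large) quantity. One should also note briefly that $\mathcal{F}_T^*$ is self-adjoint-diagonalized in the same basis because $\mathcal{F}_T = e^{-\mathcal{L}_0 T}$ with $\mathcal{L}_0$ self-adjoint, so $\mathcal{F}_T^* = \mathcal{F}_T$ and all the operators in sight share the eigenbasis $\{\phi_i\}$; this is implicitly used already in Lemma \ref{lemma:Kt}. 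With these remarks the computation is a direct transcription of the previous lemma's argument.
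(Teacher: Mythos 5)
Your proposal is correct and follows essentially the same route as the paper: diagonalize in the eigenbasis $\{\phi_i\}$ so the multiplier is $e^{-T\mu_i}e^{-t\mu_i}/(e^{-2T\mu_i}+\lambda)$, then split the squared denominator into exponents summing to $2$, absorbing the numerator with $e^{-2T\mu_i}\le e^{-2T\mu_i}+\lambda$ and extracting $\lambda^{t/T-1}$ from the remaining factor. The only cosmetic difference is that you group the paper's factors $(e^{-2T\mu_i}+\lambda)^{t/T}$ and $(e^{-2T\mu_i}+\lambda)^{1}$ into a single power $1+t/T$, which is an equivalent bookkeeping of the same estimate.
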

\begin{proof}
For any $u \in L^{2}(\Omega)$, let $u=\sum_{i=1}^{\infty} u_{i} \phi_{i}$,
where $\phi_i$ are normalized eigenfunctions of $\mathcal{L}$. Moreover, we have $\|u\|^{2}=\sum_{i=1}^{\infty} u_{i}^{2}$. Similar to the proof of Lemma \ref{lemma:Kt}, we just need to estimate the following term,
\begin{align}
 &   \sum_{i=1}^{\infty} \frac{e^{-2 t \mu_{i}} e^{-2 T \mu_{i}}}{\left(e^{-2 T \mu_{i}}+\lambda\right)^{2}} u_{i}^{2}\nonumber\\
=&\sum_{i=1}^{\infty} \frac{e^{-2 t \mu_{i}}}{\left(e^{-2 T \mu_{i}}+\lambda\right)^{t / T}} \frac{e^{-2 \tau \mu_{i}}}{e^{-2 \tau \mu_{i}}+\lambda} \frac{1}{\left(e^{-2 T \mu_{i}}+\lambda\right)^{1-t / T}} u_{i}^{2}\nonumber \\
\leq& \lambda^{t / T-1}\| u\|_{L^2(\Omega)}^{2}.
\end{align}
This proves the lemma.
\end{proof} 
The estimate \eqref{BoundEstimate1} only demonstrates the convergence of $E\left\|\mathcal{F}_{T} f_{n}-\mathcal{F}_{T} f^{*}\right\|_{n}^{2}$ for the final time $T$. To obtain the stochastic convergence result for any time $t\in [0,T]$, we need to estimate the error term $E\left\|\mathcal{F}_{t} f_{n}-\mathcal{F}_{t} f^{*}\right\|_{L^2(\Omega)}$ for $0\leq t \leq T$. Equipped with these two lemmas above, we can derive the following main theorem on the error of the backward heat conduction problem. 

To simplify the proof of the main theorem, we will choose $x_{i}, w_{i}$ in a way that the following property is satisfied:
$\sum_{i=1}^{n} w_{i}^{2} u^{2}\left(x_{i}\right)$ is approximation of $\|u\|^{2}_{L^{2}(\Omega)}$, $\left|\left\|u_{n}\right\|_n-\left\| u\right\|_{L^{2}(\Omega)}\right| \leqslant C d_{max}^2\| u \|_{H^{2}(\Omega)}$ with $d_{max}\sim n^{-\frac{1}{d}}$. % and $n^{-\frac{2}{d}}\leq C\lambda$. 
This assumption assures that the discrete norm $\|\cdot\|_n$ is close to $L^2$ norm. This is not a very strong assumption. For example, let the operator $I_{h}$ be such that $\left\|I_{h} u\right\|_{L^2(\Omega)}^{2}=\sum_{i=1}^{n} w_{i}^{2} u^{2}\left(x_{i}\right)$. One could find many kinds of discrete inner products $(\cdot,\cdot)_n$ in the practice of numerical integration. For instance,  we could choose $x_{i}$'s as the nodes of a regular finite element mesh and $I_{h}$ as the finite element interpolation operator. Then $\left\|I_{h} u-u\right\|_{L^2(\Omega)} \leqslant C h^2\| u \|_{H^{2}(\Omega)}$, where $h \sim n^{-\frac{1}{d}}$.

\begin{theorem}[Main Theorem]\label{main-thm}
Assume the discrete semi-norm $\|\cdot\|_n$ is a good approximation of $L^2$ norm in the sense above. Let $f^*$ be the ground truth of the final time  observation $m_i=\mathcal{F}_Tf^*(x_i)+e_i$, $i=1, \cdots, n$, and $f_{n}$ be the solution of the following problem: 
\begin{align}
    \min _{f \in L^{2}(\Omega)}\left\|\mathcal{F}_{T} f-m\right\|_{n}^{2}+\lambda\|f\|_{L^2(\Omega)}^{2}. \label{BoundEstimate01}
\end{align} 
For the first case, if $e_i$ are independent random variables with zero expectations and bounded variance $\sigma^2$, then,
\begin{align}\label{eqn:time_t_estimate}
    E\left\|\mathcal{F}_{t} f_{n}-\mathcal{F}_{t} f^{*}\right\|_{L^2(\Omega)} \leqslant C \lambda^{t / 2 T}\left\|f^{*}\right\|_{L^2(\Omega)}+ C \frac{\sigma^{2}}{n} \frac{1}{\lambda^{d / 4}} \lambda^{t/T-1}.
\end{align}
If we take the optimal parameter \eqref{opti-lamda}  as $\lambda^{1/2+d / 8}=\mathcal{O}(\sigma n^{-1 / 2}\left\|f^{*}\right\|_{L^2(\Omega)}^{-1})$ (balancing the first and second part in the righthand side of \eqref{eqn:time_t_estimate}),
\begin{align}\label{eqn:time_t_estimate2}
E\left\|\mathcal{F}_{t} f_{n}-\mathcal{F}_{t} f^{*}\right\|_{L^2(\Omega)} \leq C \lambda^{t / 2 T}\| f^{*} \|_{L^2(\Omega)}. 
\end{align} 
For the second case, if $e_i$ are sub-Gaussian random variables with parameter $\sigma$ and zero expectations, choose $\lambda^{1 / 2+d / 8}=\mathcal{O}( \sigma n^{-1 / 2}\rho_0)$  with $\rho_0^{-1}=\left\|f^{*}\right\|_{L^2(\Omega)}+\sigma n^{-1/2}$, then,
\begin{align}\label{eqn:time_t_estimate3}
P\left(\left\|\mathcal{F}_{t} f_{n}-\mathcal{F}_{t} f^{*}\right\|_{n} \geqslant \lambda^{t / 2 T} \rho_{0} z\right) \leq 2 e^{-C z^{2}}.
\end{align}
\end{theorem}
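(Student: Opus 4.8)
The plan is to reduce the time-$t$ error to the already-established final-time estimates \eqref{BoundEstimate1}–\eqref{BoundEstimate2} by inserting the resolvent operator and exploiting Lemmas \ref{lemma:Kt} and \ref{lemma:STST}. First I would write the optimality (normal) equation for the minimizer $f_n$ of \eqref{BoundEstimate01}: formally $(\mathcal{F}_T^*\mathcal{F}_T + \lambda I)f_n = \mathcal{F}_T^* m$ where $\mathcal{F}_T^*$ is taken with respect to the discrete inner product $(\cdot,\cdot)_n$, which by the good-approximation assumption is comparable to the $L^2$ adjoint up to a $d_{\max}^2$-type perturbation. Decomposing $m = \mathcal{F}_T f^* + e$, I get $f_n - f^* = (\mathcal{F}_T^*\mathcal{F}_T + \lambda I)^{-1}(\mathcal{F}_T^* e - \lambda f^*)$, hence
\begin{align}\label{eqn:decomp}
\mathcal{F}_t f_n - \mathcal{F}_t f^* = -\lambda\, \mathcal{F}_t(\mathcal{F}_T^*\mathcal{F}_T + \lambda I)^{-1} f^* + \mathcal{F}_t(\mathcal{F}_T^*\mathcal{F}_T + \lambda I)^{-1}\mathcal{F}_T^* e.
\end{align}
The first term is the approximation (bias) error and the second is the stochastic (noise) error; I would bound the two separately.

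For the bias term, I would use Lemma \ref{lemma:Kt}: $\|\mathcal{F}_t(\mathcal{F}_T^*\mathcal{F}_T+\lambda I)^{-1}\| \le \lambda^{-(2-t/T)/2} = \lambda^{t/2T - 1}$, so $\|\lambda\,\mathcal{F}_t(\mathcal{F}_T^*\mathcal{F}_T+\lambda I)^{-1}f^*\| \le \lambda\cdot\lambda^{t/2T-1}\|f^*\| = \lambda^{t/2T}\|f^*\|$, giving the first term of \eqref{eqn:time_t_estimate}. Actually, for sharpness I would rather diagonalize directly: writing $f^* = \sum f_k^*\phi_k$, the term equals $\sum_k \lambda e^{-t\mu_k}(e^{-2T\mu_k}+\lambda)^{-1} f_k^*\phi_k$, and the multiplier $\lambda e^{-t\mu_k}(e^{-2T\mu_k}+\lambda)^{-1}$ is bounded by $\lambda^{t/2T}$ after splitting $(e^{-2T\mu_k}+\lambda)^{-1} = (e^{-2T\mu_k}+\lambda)^{-t/2T}(e^{-2T\mu_k}+\lambda)^{-(1-t/2T)}$ and using $e^{-2T\mu_k}\le(e^{-2T\mu_k}+\lambda)$ on the first factor and $\lambda\le(e^{-2T\mu_k}+\lambda)$ on the second — exactly the trick in Lemma \ref{lemma:Kt}. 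For the stochastic term, I would take expectation of its squared $L^2$-norm; since the $e_i$ are independent mean-zero with variance $\le\sigma^2$, $E\|\mathcal{F}_t(\mathcal{F}_T^*\mathcal{F}_T+\lambda I)^{-1}\mathcal{F}_T^* e\|_{L^2}^2$ reduces (via the discrete inner product and $w_i^2 = \mathcal{O}(1/n)$) to roughly $\frac{\sigma^2}{n}\sum_i \|\mathcal{F}_t(\mathcal{F}_T^*\mathcal{F}_T+\lambda I)^{-1}\mathcal{F}_T^*\delta_{x_i}\|^2 w_i^2$, which is $\frac{\sigma^2}{n}$ times a trace-type quantity. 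Here I expect to need both the operator bound from Lemma \ref{lemma:STST} — $\|\mathcal{F}_T(\mathcal{F}_T^*\mathcal{F}_T+\lambda I)^{-1}\mathcal{F}_t\|^2 \le \lambda^{t/T-1}$ (which handles the $\mathcal{F}_t$-to-$\mathcal{F}_T$ part) — and the entropy bound of Proposition \ref{para-lem:2.3-entropy} to control the sum over sensors (the $\lambda^{-d/4}$ factor comes from the $\varepsilon^{-d/2}$ covering entropy of $BH^2$, exactly as in the proof of \eqref{BoundEstimate1} via Theorem 2.3 of \cite{Chen-Zhang2022}). Combining gives the second term $C\frac{\sigma^2}{n}\lambda^{-d/4}\lambda^{t/T-1}$ of \eqref{eqn:time_t_estimate}; balancing the two terms yields \eqref{eqn:time_t_estimate2}. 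For the sub-Gaussian case \eqref{eqn:time_t_estimate3}, I would instead run the concentration argument of Theorem 2.8 of \cite{Chen-Zhang2022}: the stochastic term is a sub-Gaussian process indexed by the (compact, entropy-controlled) image of the unit ball under $\mathcal{F}_t(\mathcal{F}_T^*\mathcal{F}_T+\lambda I)^{-1}\mathcal{F}_T^*$, so a chaining/Dudley bound plus the stated choice $\lambda^{1/2+d/8} = \mathcal{O}(\sigma n^{-1/2}\rho_0)$ gives the exponential tail with the $\lambda^{t/2T}\rho_0$ scaling.

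The main obstacle, I expect, is making the second (stochastic) term rigorous: passing from $\|\cdot\|_n$ to $\|\cdot\|_{L^2}$ for the operators involved requires the good-approximation hypothesis and introduces $d_{\max}^2\|\cdot\|_{H^2}$ perturbations that must be shown to be lower-order (absorbed into the constants using $d_{\max}\sim n^{-1/d}$ and the smoothing property $\|\mathcal{F}_t g\|_{H^2}\le C_t\|g\|_{L^2}$), and more importantly, controlling the trace-type / empirical-process sum over the $n$ sensors uniformly in $\lambda$ demands the covering-entropy estimate rather than a naive bound — this is precisely where the exponent $-d/4$ (as opposed to something worse) is earned, and where one must cite the machinery of \cite{Chen-Zhang2022} carefully rather than re-deriving it. A secondary subtlety is that Lemma \ref{lemma:STST} as stated in the excerpt introduces an auxiliary parameter $\tau$ whose role I would need to pin down (presumably $\tau = T - t$ or a free splitting parameter); I would restate and use the clean consequence $\|\mathcal{F}_T(\mathcal{F}_T^*\mathcal{F}_T+\lambda I)^{-1}\mathcal{F}_t\|^2\le \lambda^{t/T-1}$ directly.
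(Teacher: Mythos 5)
Your proposal follows essentially the same route as the paper: the same normal-equation decomposition into a bias term $-\lambda K_t f^{*}$, a stochastic term $K_t P_{T}^{*}e$, and a discrete-adjoint perturbation term controlled by $d_{\max}^2$, with Lemmas \ref{lemma:Kt} and \ref{lemma:STST} supplying the operator bounds and the machinery of \cite{Chen-Zhang2022} supplying the $\lambda^{-d/4}$ factor and the sub-Gaussian tail. The only minor imprecision is that in the bounded-variance case the paper obtains the $\lambda^{-d/4}$ factor from the eigenvalue decay $\beta_k\ge Ck^{4/d}$ of the discrete eigenvalue problem (Lemma \ref{lemma3}), whereas the covering-entropy bound of Proposition \ref{para-lem:2.3-entropy} enters only in the sub-Gaussian case via Proposition \ref{Geer-entropy}.
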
 

To make the proof of the main theorem clearer, we will present three lemmas that will be used in the proof. 

\begin{lemma}\label{lemma1}
%With the same assumption of theorem \ref{main-thm}, there exists an operator $P_{T}^{*}:R^n\to L^2(\Omega)$ being the adjoint operator respect to the inner product $(\cdot, \cdot)_{n}$.
Assuming the same conditions as stated in Theorem \ref{main-thm}, we can assert the existence of an operator $P_{T}^{*}: R^n \to L^2(\Omega)$ that acts as the adjoint operator with respect to the inner product $(\cdot, \cdot)_{n}$.
\end{lemma}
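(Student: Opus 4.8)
The plan is to make the statement of Lemma~\ref{lemma1} concrete by writing down the operator $P_T^*$ explicitly and then verifying the adjoint identity by a direct computation. First I would recall that the data-fitting term uses the discrete inner product $(u,v)_n=\sum_{i=1}^n w_i^2 u(x_i)v(x_i)$, so the natural ``observation'' map is $P_T:L^2(\Omega)\to\mathbb{R}^n$ given by $P_T f=(\mathcal{F}_Tf(x_1),\dots,\mathcal{F}_Tf(x_n))$, which is well-defined pointwise by the $H^2\hookrightarrow C(\bar\Omega)$ embedding argument already established in Section~2. Equip $\mathbb{R}^n$ with the weighted inner product $\langle a,b\rangle_n=\sum_{i=1}^n w_i^2 a_i b_i$. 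Then the claim is precisely that $P_T$ has a bounded adjoint $P_T^*:\mathbb{R}^n\to L^2(\Omega)$ characterized by $(P_T^* a, v)_{L^2(\Omega)}=\langle a, P_T v\rangle_n$ for all $v\in L^2(\Omega)$ and all $a\in\mathbb{R}^n$.

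The key steps, in order, would be: (1) verify that $P_T$ is a bounded linear operator from $L^2(\Omega)$ to $(\mathbb{R}^n,\langle\cdot,\cdot\rangle_n)$, using $\|\mathcal{F}_Tf\|_{C(\bar\Omega)}\le C\|\mathcal{F}_Tf\|_{H^2(\Omega)}\le C_T\|f\|_{L^2(\Omega)}$ together with $\sum_i w_i^2=\mathcal{O}(1)$; (2) invoke the Riesz representation theorem on the Hilbert space $L^2(\Omega)$ — since for each fixed $a\in\mathbb{R}^n$ the map $v\mapsto \langle a,P_Tv\rangle_n=\sum_i w_i^2 a_i\,\mathcal{F}_Tv(x_i)$ is a bounded linear functional on $L^2(\Omega)$, there is a unique $g_a\in L^2(\Omega)$ with $\langle a,P_Tv\rangle_n=(g_a,v)_{L^2(\Omega)}$, and we set $P_T^* a:=g_a$; (3) check linearity of $a\mapsto g_a$ and boundedness, i.e. $\|P_T^*\|=\|P_T\|$. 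One may also give $P_T^*$ the concrete spectral form: writing $\mathcal{F}_T\phi_k=e^{-\mu_k T}\phi_k$, one gets $P_T^* a=\sum_{k}\big(\sum_i w_i^2 a_i e^{-\mu_k T}\phi_k(x_i)\big)\phi_k$, which makes transparent that it maps into $L^2(\Omega)$ and is the sought adjoint. It is in this representation that the combination $\mathcal{F}_T^*\mathcal{F}_T$ appearing in Lemmas~\ref{lemma:Kt} and \ref{lemma:STST} connects to the discrete least-squares problem via $P_T^* P_T$.

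I do not expect a serious obstacle here; this is essentially a bookkeeping lemma. The only point requiring a little care is making sure the pointwise evaluation $f\mapsto \mathcal{F}_Tf(x_i)$ is genuinely a bounded functional on $L^2(\Omega)$ rather than merely on a smoother subspace — but this is exactly what the smoothing property of the analytic semigroup $\mathcal{F}_T=e^{-\mathcal{L}_0 T}$ and the Sobolev embedding $H^2(\Omega)\hookrightarrow C(\bar\Omega)$ (valid for $d\le 3$) provide, and it has already been noted in Section~2. A secondary subtlety is that $(\cdot,\cdot)_n$ is only a semi-inner product on $C(\bar\Omega)$, but on the range of $P_T$ (a finite-dimensional subspace of $\mathbb{R}^n$) the weighted form $\langle\cdot,\cdot\rangle_n$ with $w_i^2>0$ is a genuine inner product, so the Riesz argument goes through without change. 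Once $P_T^*$ is in hand, its boundedness and the identity $(P_T^*a,v)_{L^2(\Omega)}=(a,P_Tv)_n$ are immediate, completing the proof.
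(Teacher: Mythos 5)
Your proposal is correct and follows essentially the same route as the paper: for each fixed $u\in\mathbb{R}^n$ one checks that $v\mapsto (u,\mathcal{F}_T v)_n$ is a bounded linear functional on $L^2(\Omega)$ (using the smoothing of $\mathcal{F}_T$ and the embedding $H^2(\Omega)\hookrightarrow C(\bar\Omega)$) and then applies the Riesz representation theorem to define $P_T^*u$. The extra material you include (the explicit observation map $P_T$, the spectral formula, the equality $\|P_T^*\|=\|P_T\|$) is sound but not needed for the paper's argument.
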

\begin{proof}
Let $P_{T}^{*}:R^n\to L^2(\Omega)$ be the adjoint operator with respect to the inner product $(\cdot, \cdot)_{n}$, i.e. 
\begin{align}
\left(u, \mathcal{F}_{T} v\right)_{n}=\left(P_{T}^{*} u, v\right)_{L^{2}(\Omega)}, \forall u \in R^{n}, v \in L^{2}(\Omega).
\end{align} 
Let $\mathcal{F}_{T}^{*}$ be the adjoint operator such that, 
\begin{align}
    \forall u, v \in L^{2}, \quad\left(u, \mathcal{F}_{T} v\right)_{L^{2}}=\left(\mathcal{F}_{T}^{*} u, v\right)_{L^{2}}.
\end{align}
Moreover we should point out that $\mathcal{F}_{T}$ is self-adjoint \cite{renardy2006introduction,Pazy:83}, meaning that$\mathcal{F}_{T}^{*}=\mathcal{F}_{T}$.

First, we will show the existence of the adjoint operator $P_{T}^{*}$. Fixing $u$, let $F_u(v)=\left(u, \mathcal{F}_{T} v\right)_n$, then 
$F_u$ is a bounded linear operator on $L^2\to R$:
\begin{align}
    F_u(v)\leq \|u\|_n\|\mathcal{F}_Tv\|_n\leq \|u\|_n\|v\|_{L^2}.
\end{align}
According to the Riesz representation Theorem, there exists a unique $f_u\in L^2$ such that $F_u(v)=\left(u, \mathcal{F}_{T} v\right)_n=(f_u,v)$, for all $v \in L^2$. We then define the linear operator $P_T^*u=f_u$ for any $u\in R^n$. 
\end{proof} 

\begin{lemma}\label{lemma2}
Under the same assumptions as stated in Theorem \ref{main-thm}, we can conclude that  
$$\|\left(P_{T}^{*} \mathcal{F}_{T}-\mathcal{F}_{T}^{*} \mathcal{F}_{T}\right) f\|_{L^{2}(\Omega)}  \leqslant Cd_{max}^2\left\|f\right\|_{L^{2}(\Omega)},$$
where $C$ is a constant.
\end{lemma}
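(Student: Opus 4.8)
The plan is to estimate the operator $P_T^*\mathcal{F}_T - \mathcal{F}_T^*\mathcal{F}_T$ by testing against an arbitrary $g \in L^2(\Omega)$ and exploiting the defining relations of the two adjoints together with the closeness of the discrete semi-norm $\|\cdot\|_n$ to the $L^2$ norm. For any $f, g \in L^2(\Omega)$, I would write

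\begin{align}
\bigl((P_T^*\mathcal{F}_T - \mathcal{F}_T^*\mathcal{F}_T)f,\, g\bigr)_{L^2(\Omega)}
= (P_T^*\mathcal{F}_T f, g)_{L^2(\Omega)} - (\mathcal{F}_T^*\mathcal{F}_T f, g)_{L^2(\Omega)}
= (\mathcal{F}_T f, \mathcal{F}_T g)_n - (\mathcal{F}_T f, \mathcal{F}_T g)_{L^2(\Omega)},
\end{align}

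where the first term uses the definition of $P_T^*$ from Lemma \ref{lemma1} (applied with $u = \mathcal{F}_T f \in R^n$, i.e. its sampled values) and the second uses the definition of $\mathcal{F}_T^*$. Thus the whole question reduces to controlling the quadrature error $\bigl|(\mathcal{F}_T f, \mathcal{F}_T g)_n - (\mathcal{F}_T f, \mathcal{F}_T g)_{L^2(\Omega)}\bigr|$ for the product of two functions of the form $\mathcal{F}_T(\cdot)$.

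Next I would invoke the regularity of the forward operator established in Section 2: $\|\mathcal{F}_T f\|_{H^2(\Omega)} \le C_T \|f\|_{L^2(\Omega)}$, and likewise for $g$. By the Sobolev embedding $H^2(\Omega) \hookrightarrow C(\bar\Omega)$ (for $d \le 3$), both $\mathcal{F}_T f$ and $\mathcal{F}_T g$ are continuous, so the pointwise evaluations in $(\cdot,\cdot)_n$ make sense. The quadrature hypothesis stated before Theorem \ref{main-thm} gives $\bigl|\|u\|_n - \|u\|_{L^2(\Omega)}\bigr| \le C d_{\max}^2 \|u\|_{H^2(\Omega)}$; polarizing this (applying it to $u = \mathcal{F}_T f + \mathcal{F}_T g$ and $u = \mathcal{F}_T f - \mathcal{F}_T g$, or directly bounding $(\cdot,\cdot)_n - (\cdot,\cdot)_{L^2}$ on products) yields
\begin{align}
\bigl|(\mathcal{F}_T f, \mathcal{F}_T g)_n - (\mathcal{F}_T f, \mathcal{F}_T g)_{L^2(\Omega)}\bigr| \le C d_{\max}^2 \|\mathcal{F}_T f\|_{H^2(\Omega)} \|\mathcal{F}_T g\|_{H^2(\Omega)} \le C d_{\max}^2 \|f\|_{L^2(\Omega)} \|g\|_{L^2(\Omega)}.
\end{align}
Taking the supremum over $g$ with $\|g\|_{L^2(\Omega)} \le 1$ and using the Riesz/duality characterization of the $L^2$ norm then gives $\|(P_T^*\mathcal{F}_T - \mathcal{F}_T^*\mathcal{F}_T)f\|_{L^2(\Omega)} \le C d_{\max}^2 \|f\|_{L^2(\Omega)}$, which absorbs the $C_T^2$ into the constant $C$.

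The main obstacle is making the polarization step rigorous: the stated hypothesis is phrased only for the (semi-)norm $\|u\|_n$ versus $\|u\|_{L^2(\Omega)}$, whereas I need the bilinear statement for $(\cdot,\cdot)_n$ against $(\cdot,\cdot)_{L^2}$ on a product of two generally distinct functions. I would handle this by either (i) reading the hypothesis in its natural underlying form — that $\sum_i w_i^2 u(x_i)v(x_i)$ approximates $\int_\Omega uv$ with error $C d_{\max}^2 \|u\|_{H^2}\|v\|_{H^2}$, which is exactly what the finite-element interpolation example in the text provides via $\|I_h u - u\|_{L^2} \le Ch^2\|u\|_{H^2}$ — or (ii) formally polarizing: $4(u,v)_n - 4(u,v)_{L^2} = (\|u+v\|_n^2 - \|u+v\|_{L^2}^2) - (\|u-v\|_n^2 - \|u-v\|_{L^2}^2)$, bounding each difference of squares by $\bigl(\|u\pm v\|_n + \|u \pm v\|_{L^2}\bigr)\cdot C d_{\max}^2 \|u \pm v\|_{H^2} \le C d_{\max}^2 \|u\pm v\|_{H^2}^2$, and finishing with the triangle inequality and $\|u \pm v\|_{H^2}^2 \le 2\|u\|_{H^2}^2 + 2\|v\|_{H^2}^2$. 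Either route is routine once the quadrature hypothesis is read correctly; no other serious difficulty arises.
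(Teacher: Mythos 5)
Your proposal is correct and follows essentially the same route as the paper: the paper also reduces the claim to the quadrature error $(\mathcal{F}_T f,\mathcal{F}_T g)_n-(\mathcal{F}_T f,\mathcal{F}_T g)_{L^2(\Omega)}$ (testing directly with $g=F=(P_T^*\mathcal{F}_T-\mathcal{F}_T^*\mathcal{F}_T)f$ rather than taking a supremum over unit $g$), and handles the bilinear form exactly via your option (i), writing $(u,v)_n=(I_h u,I_h v)_{L^2(\Omega)}$, adding and subtracting, and invoking $\|I_h u-u\|_{L^2}\le C d_{\max}^2\|u\|_{H^2}$ together with $\|\mathcal{F}_T f\|_{H^2(\Omega)}\le C_T\|f\|_{L^2(\Omega)}$. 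The only cosmetic differences are the choice of test function and your optional polarization variant, both of which are equivalent to the paper's argument.
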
 
 
\begin{proof}
Denote $F=\left(P_{T}^{*} \mathcal{F}_{T}-\mathcal{F}_{T}^{*} \mathcal{F}_{T}\right) f$. We can calculate the 
$L^2$ norm of $F$ as follows:
\begin{align}
\|F\|_{L^2(\Omega)}^{2}=&(F, F)=\left(\left(P_{T}^{*} \mathcal{F}_{T}-\mathcal{F}_{T}^{*} \mathcal{F}_{T}\right)f, F\right)_{L^2(\Omega)} \nonumber\\
=&\left(\mathcal{F}_{T}f, \mathcal{F}_{T} F\right)_{n}-\left(\mathcal{F}_{T}f, \mathcal{F}_{T} F\right)_{L^{2}(\Omega)} \nonumber\\
=&\left(I_{h} \mathcal{F}_{T}f, I_{h} \mathcal{F}_{T} F\right)_{L^2(\Omega)}-\left(\mathcal{F}_{T}f, \mathcal{F}_{T} F\right)_{L^{2}(\Omega)} \nonumber\\
=&\left(I_{h} \mathcal{F}_{T}f-\mathcal{F}_{T}f, I_{h} \mathcal{F}_{T} F\right)_{L^{2}(\Omega)} \nonumber\\
&+\left(\mathcal{F}_{T}f, I_{h} \mathcal{F}_{T} F-\mathcal{F}_{T} F\right)_{L^{2}(\Omega)} \nonumber\\
\leqslant & C d_{max}^2\left\|f\right\|_{L^{2}(\Omega)} \| F\|_{L^{2}(\Omega)} 
\end{align}
Therefore, we obtain that 
\begin{align}
  \|F\|_{L^{2}(\Omega)} & \leqslant Cd_{max}^2\left\|f\right\|_{L^{2}(\Omega)},
 \end{align}
\end{proof}

\begin{lemma}\label{lemma3}
With the same assumption as Theorem \ref{main-thm}, we can conclude that 
\begin{equation}\label{boundIII}
    E\|K_t P_{T}^{*} e\|^2_{L^2(\Omega)}\leqslant C \frac{\sigma^{2}}{n} \frac{1}{\lambda^{d / 4}} \lambda^{t/T-1}.
\end{equation}
\end{lemma}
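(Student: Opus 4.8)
Here is how I would prove Lemma \ref{lemma3}. The plan is to make the randomness explicit by expanding $P_T^*e$ over the sensors, use independence of the $e_i$ to turn $E\|K_tP_T^*e\|^2$ into a deterministic spectral sum, and then control that sum by counting how many eigenmodes survive the regularization. Concretely, write $P_T^*e=\sum_{i=1}^n w_i^2\,e_i\,g_i$, where $g_i\in L^2(\Omega)$ is the Riesz representer of the bounded linear functional $v\mapsto(\mathcal{F}_Tv)(x_i)$ on $L^2(\Omega)$ (bounded because $\|\mathcal{F}_Tv\|_{H^2(\Omega)}\le C_T\|v\|_{L^2(\Omega)}$ and $H^2(\Omega)\hookrightarrow C(\bar\Omega)$, and the existence of $P_T^*$ is already recorded in Lemma \ref{lemma1}). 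Then $K_tP_T^*e=\sum_{i=1}^n w_i^2\,e_i\,K_tg_i$, and since the $e_i$ are independent with $E[e_i]=0$ and $E[e_i^2]\le\sigma^2$ the cross terms vanish:
\begin{align}
E\|K_tP_T^*e\|_{L^2(\Omega)}^2
=\sum_{i=1}^n w_i^4\,E[e_i^2]\,\|K_tg_i\|_{L^2(\Omega)}^2
\le\sigma^2\sum_{i=1}^n w_i^4\,\|K_tg_i\|_{L^2(\Omega)}^2
\le\frac{C\sigma^2}{n}\sum_{i=1}^n w_i^2\,\|K_tg_i\|_{L^2(\Omega)}^2 ,
\end{align}
the last step using $w_i^4\le(C/n)w_i^2$, which follows from $w_i^2=\mathcal{O}(1/n)$. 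So it remains to prove $\sum_{i=1}^n w_i^2\|K_tg_i\|_{L^2(\Omega)}^2\lesssim \lambda^{t/T-1}\lambda^{-d/4}$.

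Next I would diagonalize. By Proposition \ref{para-lem:2.1} and Lemma \ref{para-lem2.1-eigen} the operators $\mathcal{F}_T$, $\mathcal{F}_t$ and $(\mathcal{F}_T^*\mathcal{F}_T+\lambda I)^{-1}$ are simultaneously diagonal in the normalized eigenbasis $\{\phi_k\}$ (as in the proof of Lemma \ref{lemma:Kt}), with $\mathcal{F}_T\phi_k=e^{-\mu_kT}\phi_k$, $\mathcal{F}_t\phi_k=e^{-\mu_kt}\phi_k$, hence $K_t\phi_k=\frac{e^{-\mu_kt}}{e^{-2\mu_kT}+\lambda}\phi_k$, while $(g_i,\phi_k)_{L^2(\Omega)}=(\mathcal{F}_T\phi_k)(x_i)=e^{-\mu_kT}\phi_k(x_i)$. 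Expanding each $\|K_tg_i\|^2$ in this basis and interchanging the sums over $i$ and $k$ (using $\sum_i w_i^2\phi_k(x_i)^2=\|\phi_k\|_n^2$) gives
\begin{align}
\sum_{i=1}^n w_i^2\,\|K_tg_i\|_{L^2(\Omega)}^2
=\sum_{k=1}^\infty\frac{e^{-2\mu_k(t+T)}}{\big(e^{-2\mu_kT}+\lambda\big)^2}\,\|\phi_k\|_n^2 .
\end{align}
By the assumed closeness of $\|\cdot\|_n$ to $\|\cdot\|_{L^2(\Omega)}$ together with Proposition \ref{para-lem:2.1}, $\|\phi_k\|_n\le\|\phi_k\|_{L^2(\Omega)}+Cd_{\max}^2\|\phi_k\|_{H^2(\Omega)}\lesssim 1+\mu_k$, so the weight $\|\phi_k\|_n^2$ grows only polynomially in $\mu_k$ and is harmless against the exponentially small factor $e^{-2\mu_kT}$.

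The core of the proof is to estimate the scalar series just displayed. With $a_k:=e^{-2\mu_kT}\in(0,1)$ each summand is $a_k^{1+t/T}/(a_k+\lambda)^2$ times a polynomial weight, and a one-line case distinction ($a_k\ge\lambda$ versus $a_k<\lambda$) shows $a_k^{1+t/T}/(a_k+\lambda)^2\le\lambda^{t/T-1}$ for every $k$, with the sharper bound $\le e^{-2\mu_k(t+T)}/\lambda^2$ when $a_k<\lambda$. Let $K_\lambda:=\#\{k:\ e^{-2\mu_kT}\ge\lambda\}=\#\{k:\ \mu_k\le\tfrac1{2T}\log(1/\lambda)\}$; by the two-sided bound $C_1k^{2/d}\le\mu_k\le C_2k^{2/d}$ of Proposition \ref{para-lem:2.1} we have $K_\lambda\le C\big(\log(1/\lambda)\big)^{d/2}$, and on this index range the polynomial weights are bounded by a fixed power $c$ of $\log(1/\lambda)$, so the first $K_\lambda$ terms contribute $\lesssim\big(\log(1/\lambda)\big)^{c}\lambda^{t/T-1}$. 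For $k>K_\lambda$ one has $\mu_k>\tfrac1{2T}\log(1/\lambda)$, so $e^{-2\mu_k(t+T)}=\lambda^{1+t/T}\,e^{-2(t+T)\big(\mu_k-\frac1{2T}\log(1/\lambda)\big)}$, and the remaining series converges rapidly and is dominated, up to a fixed power of $\log(1/\lambda)$, by $\lambda^{1+t/T}$; dividing by $\lambda^2$ this part also contributes $\lesssim\big(\log(1/\lambda)\big)^{c}\lambda^{t/T-1}$. Combining the two ranges and using that $\big(\log(1/\lambda)\big)^{c}\le C\lambda^{-d/4}$ for all small $\lambda$ yields $\sum_i w_i^2\|K_tg_i\|^2\lesssim\lambda^{-d/4}\lambda^{t/T-1}$, which with the first display proves \eqref{boundIII}. (Alternatively, once the spectral identity above is in hand one may feed it into the abstract variance estimate of Theorem 2.3 in \cite{Chen-Zhang2022}, whose complexity input is precisely the covering-entropy bound $\varepsilon^{-d/2}$ of Proposition \ref{para-lem:2.3-entropy}, and read off the factor $\lambda^{-d/4}$ directly.)

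The step I expect to be the main obstacle is the tail estimate over $k>K_\lambda$: since only the loose two-sided bounds $C_1k^{2/d}\le\mu_k\le C_2k^{2/d}$ are available — not the sharp Weyl asymptotics — one has to split this tail once more at an index of order $K_\lambda$ so that the gaps $\mu_k-\mu_{K_\lambda+1}$ are genuinely of order $k^{2/d}$, after which the series is (super-)geometrically convergent for $d\le 3$ and controlled by its first term up to a power of $\log(1/\lambda)$. This is exactly where the exponential ill-posedness of the backward problem is absorbed, and where the deliberately wasteful bound $\big(\log(1/\lambda)\big)^{c}\le C\lambda^{-d/4}$ — chosen to match the form of the variance term in Theorem \ref{thm:convergence} — enters; everything else (the Riesz representation, the variance expansion via independence, and the spectral diagonalization) is routine.
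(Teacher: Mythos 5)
Your proof is correct, but it takes a genuinely different route from the paper's. The paper argues by duality: it writes $(K_tP_T^*e,v)_{L^2}=(e,\mathcal F_Tv_1)_n$ with $v_1=(\mathcal F_T^*\mathcal F_T+\lambda I)^{-1}\mathcal F_tv$, expands in the \emph{empirical} eigenbasis $\{\psi_k\}$ of $(\psi,v)=\beta(\mathcal F_T\psi,\mathcal F_Tv)_n$ (orthonormal in $\|\cdot\|_n$ by construction), applies Cauchy--Schwarz with weights $(1+\lambda\beta_k)$, uses Lemmas \ref{lemma:Kt} and \ref{lemma:STST} to bound the energy factor by $\lambda^{t/T-1}\|v\|^2_{L^2(\Omega)}$, and finally invokes the lower bound $\beta_k\gtrsim k^{4/d}$ (via Lemma 2.2 of \cite{Chen-Zhang2022}) to sum $\sum_k(1+\lambda\beta_k)^{-1}\lesssim n\lambda^{-d/4}$. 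You instead expand $P_T^*e=\sum_iw_i^2e_ig_i$ over the sensors, kill the cross terms by independence, and diagonalize in the \emph{continuous} eigenbasis $\{\phi_k\}$, reducing everything to the scalar series $\sum_k e^{-2\mu_k(t+T)}(e^{-2\mu_kT}+\lambda)^{-2}\|\phi_k\|_n^2$, which you control by counting the $\mathcal{O}((\log(1/\lambda))^{d/2})$ modes surviving the regularization. This bypasses Lemmas \ref{lemma:Kt} and \ref{lemma:STST} and the discrete eigenvalue machinery entirely, at the price of needing the quadrature assumption to control $\|\phi_k\|_n$ (via $\|\phi_k\|_{H^2(\Omega)}\lesssim\mu_k$, i.e.\ elliptic $H^2$ regularity) — a step the paper avoids because its empirical eigenfunctions are $\|\cdot\|_n$-orthonormal by fiat. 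A notable payoff of your computation is that it exposes the true size of the variance term for the backward heat problem: because the spectrum of $\mathcal F_T$ decays exponentially, you actually obtain $C\frac{\sigma^2}{n}(\log(1/\lambda))^{c}\lambda^{t/T-1}$, which is strictly sharper than the stated $\lambda^{-d/4}$ factor; the paper's bound arises only because it feeds the crude polynomial minorant $e^{2T\mu_k}\gtrsim k^{4/d}$ into the general framework of \cite{Chen-Zhang2022}. Your deliberate weakening $(\log(1/\lambda))^{c}\le C\lambda^{-d/4}$ to match the statement, and your handling of the tail (splitting once more at an index of order $K_\lambda$ so that $\mu_k-\mu_{K_\lambda+1}\gtrsim k^{2/d}$, using only the two-sided bound $C_1k^{2/d}\le\mu_k\le C_2k^{2/d}$), are both sound.
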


\begin{proof}
 For any $v\in {L^{2}(\Omega)}$, we know that 
\begin{align}
(K_t P_{T}^{*} e, v)_{L^2(\Omega)} &=\left(\mathcal{F}_{t}\left(\mathcal{F}_{T}^{*} \mathcal{F}_{T}+\lambda I\right)^{-1} P_{T}^{*} e, v\right)_{L^{2}(\Omega)}  \nonumber\\
&=\left(e, \mathcal{F}_{T}\left(\mathcal{F}_{T}^{*} \mathcal{F}_{T}+\lambda I\right)^{-1} \mathcal{F}_{t} v\right)_{n} \nonumber\\
&=\left(e, \mathcal{F}_{T} v_{1}\right)_{n},
\end{align}
where 
$v_{1} \stackrel{\Delta}{=}\left(\mathcal{F}_{T}^{*} \mathcal{F}_{T}+\lambda I\right)^{-1} \mathcal{F}_{t} v$. 

Assuming that $\beta_1\le\beta_2\le\cdots\le\beta_n$ are the eigenvalues of the problem
\begin{align}\label{x2}
(\psi,v)=\beta(\mathcal{F}_T\psi,\mathcal{F}_Tv)_n, \quad \forall v\in V_n, 
\end{align}
with corresponding eigenfunctions $\{\psi_k\}^n_{k=1}$, which form a set of orthonormal basis functions spanning $V_n=\m{span}\{\psi_1, \cdots, \psi_n\}$ under the inner product $(\mathcal{F}_T\cdot,\mathcal{F}_T\cdot)_n$, we can conclude that the eigenvalues satisfy $\beta_k\geq Ck^{4/d}$ by using Lemma \ref{para-lem2.1-eigen} in this paper and Lemma 2.2 in \cite{Chen-Zhang2022}. Therefore, we have
$(\mathcal{F}_T\psi_k,\mathcal{F}_T\psi_l)_n=\delta_{kl}$, and as a result, $(\psi_k,\psi_l)=\beta_k\delta_{kl}$, $k,l=1,2,\cdots,n$ and $\mathcal{F}_T\psi_i(x_j)=\delta_{ij}$. We can define the interpolation operator $I$ from $L^2$ to $V_n$ as:
\begin{align}
Iv=\sum_{i=1}^n (\mathcal{F}_Tv)(x_i) \psi_i.
\end{align}
This operator satisfies that $\|Iv\|\leq \|v\|$. For the complete proof, please refer to Lemma 2.2 in \cite{Chen-Zhang2022}. Moreover, we can define  $e'_i=nw_ie_i$, which means that $\{e'_i\}$ are also independent random variables and have the same order variance as $\{e_i\}$ since $w_i=O(1/n)$. It is easy to verify that $\mathcal{F}_T(v-Iv)(x_i)=0$ for all $x_i$.
   
Now for $v_1$, 
we have the expansion $v_1(x)=Iv_1+v_1-Iv_1=\sum^n_{k=1}u_k\psi_k(x) +v_1-Iv_1$, where $u_k=(\mathcal{F}_Tv,\mathcal{F}_T\psi_k)_n$ for $k=1,2,\cdots,n$.  Therefore, 
$\|\mathcal{F}_TIv_1\|_n^2+\lambda\|Iv_1\|^2_{L^2}=\sum^n_{k=1}(\lambda\beta_k+1)u_k^2$.
By the Cauchy-Schwarz inequality, we can easily obtain:  
\begin{align}
(e,\mathcal{F}_Tv_1)_n^2 &=  (e,\mathcal{F}_TIv_1)_n^2\\
 &= \frac{1}{n^2}\sum^n_{i=1}e'_i\left(\sum^n_{k=1} u_k \psi_k(x_i)\right) = \frac{1}{n^2}\sum^n_{k=1}u_k\left(\sum^n_{i=1} e'_i \psi_k(x_i)\right) \\
&\leq \frac{1}{n^2}\sum^n_{k=1}(1+\lambda_n\beta_k)u_k^2\cdot\sum^n_{k=1}(1+\lambda_n\beta_k)^{-1}\Big(\sum^n_{i=1}e'_i (\mathcal{F}\psi_k)(x_i)\Big)^2 \\
&= \left(\|\mathcal{F}_Tv_1\|_n^2+\lambda\|Iv_1\|^2_{L^2}\right)\frac{1}{n^2}\cdot\sum^n_{k=1}(1+\lambda_n\beta_k)^{-1}\Big(\sum^n_{i=1}e'_i (\mathcal{F}\psi_k)(x_i)\Big)^2 \\
&\leq C \left(\lambda^{t / T-1} \| v \|_{L^2(\Omega)}^{2}+\lambda \cdot \lambda^{t/T-2} \|v\|^{2}_{L^2(\Omega)}\right)\frac{1}{n^2}\cdot\sum^n_{k=1}(1+\lambda_n\beta_k)^{-1}\Big(\sum^n_{i=1}e'_i (\mathcal{F}\psi_k)(x_i)\Big)^2\\
&\leq C \lambda^{t / T-1}\|v\|^{2}_{L^2(\Omega)}\frac{1}{n^2}\cdot\sum^n_{k=1}(1+\lambda_n\beta_k)^{-1}\Big(\sum^n_{i=1}e'_i (\mathcal{F}\psi_k)(x_i)\Big)^2,
\end{align}
where in the second-to-last inequality, we have applied Lemma \ref{lemma:Kt} and Lemma \ref{lemma:STST}. 

Then we have, 
\begin{align}
\sup_{v\in L^2}\frac{(K_t P_{T}^{*} e, v)^2}{\|v\|^{2}_{L^2(\Omega)}}=\sup_{v\in L^2}\frac{(e,\mathcal{F}_Tv_1)_n^2}{\|v\|^{2}_{L^2(\Omega)}} &\leq\lambda^{t / T-1}\frac{1}{n^2}\cdot\sum^n_{k=1}(1+\lambda_n\beta_k)^{-1}\Big(\sum^n_{i=1}e_i (\mathcal{F}\psi_k)(x_i)\Big)^2.
\end{align}
Taking expectations on both sides, we get
\begin{align}
E\sup_{v\in L^2}\frac{(K_t P_{T}^{*} e, v)^2}{\|v\|^{2}_{L^2(\Omega)}} &\leq E\lambda^{t / T-1}\frac{1}{n^2}\cdot\sum^n_{k=1}(1+\lambda_n\beta_k)^{-1}\Big(\sum^n_{i=1}e_i (\mathcal{F}\psi_k)(x_i)\Big)^2\\
&=\lambda^{t / T-1}\frac{1}{n^2}\cdot\sum^n_{k=1}(1+\lambda_n\beta_k)^{-1} E\Big(\sum^n_{i=1}e_i (\mathcal{F}\psi_k)(x_i)\Big)^2\\
&\leq C\frac{\sigma^{2}}{n} \frac{1}{\lambda^{d / 4}}\lambda^{t / T-1},
\end{align}
where the fact that $\beta_k\geq Ck^{4/d}$ is used in the last inequality.

Thus,
\begin{equation}
    E\|K_t P_{T}^{*} e\|^2_{L^2(\Omega)}\leqslant C \frac{\sigma^{2}}{n} \frac{1}{\lambda^{d / 4}} \lambda^{t/T-1}.
\end{equation}
\end{proof} 

We will also recall a useful lemma of Van De Geer concerning stochastic convergence. In terms of the terminology of the stochastic convergence order, we denote a random variable $X=\mathcal{O}_{p}(z)$ if $X$ is a sub-Gaussian random variable with zero expectation and parameter $z$.
 \begin{prop}
\label{Geer-entropy}[\cite{Geer2000}, lemma 8.4]
 Suppose $G$ is a function space and $\log N\left(\varepsilon, B_G,\left\|\cdot\right\|_{n}\right) \leqslant C \varepsilon^{-\gamma}$, where $N$ denotes the local cover number of the unit ball $B_G$. Then, we have
$$
\sup_{g \in G} \frac{\left|(e, g)_{n}\right|}{\|g\|_{n}^{1-\gamma/ 2}\|g\|_{G}^{\gamma/2}}=\mathcal{O}_{p}\left(\sigma n^{-1 / 2}\right).
$$
\end{prop}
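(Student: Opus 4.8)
The statement is van de Geer's weighted maximal inequality, so the plan is to prove it by the standard chaining-plus-peeling machinery of empirical process theory, drawing only on the entropy hypothesis $\log N(\varepsilon, B_G, \|\cdot\|_n) \le C\varepsilon^{-\gamma}$ and the sub-Gaussianity of the noise. First I would record the two structural facts that drive everything. The target ratio is homogeneous of degree zero in $g$: replacing $g$ by $cg$ multiplies numerator and denominator by $|c|$, since $(1-\gamma/2)+\gamma/2 = 1$. Hence it suffices to restrict to the unit ball $g \in B_G$ (i.e.\ $\|g\|_G = 1$) and to bound $\sup_{g \in B_G} |(e,g)_n| / \|g\|_n^{1-\gamma/2}$. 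Second, because each $e_i$ is sub-Gaussian with parameter $\sigma$ and $w_i^2 = O(1/n)$, the centered linear functional $g \mapsto (e,g)_n$ has sub-Gaussian increments with respect to a rescaled empirical metric: $(e, g-g')_n$ is sub-Gaussian with parameter of order $\frac{\sigma}{\sqrt n}\|g-g'\|_n$. Thus $\{(e,g)_n\}_{g \in B_G}$ is a sub-Gaussian process whose intrinsic metric is $\frac{\sigma}{\sqrt n}\|\cdot\|_n$, which is exactly the object Dudley's entropy bound controls.

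The second step is a fixed-scale chaining bound. For a radius $R>0$ set $\mathcal{G}_R = \{g \in B_G : \|g\|_n \le R\}$. Its local entropy is dominated by that of $B_G$, so Dudley's chaining inequality for sub-Gaussian processes (with $(e,0)_n=0$ as the base point) yields
\[
E \sup_{g \in \mathcal{G}_R} |(e,g)_n| \lesssim \frac{\sigma}{\sqrt n} \int_0^R \sqrt{\log N(\varepsilon, B_G, \|\cdot\|_n)}\, d\varepsilon \lesssim \frac{\sigma}{\sqrt n}\int_0^R \varepsilon^{-\gamma/2}\, d\varepsilon \lesssim \frac{\sigma}{\sqrt n} R^{1-\gamma/2},
\]
where the entropy integral converges precisely because $\gamma < 2$ (here $\gamma = d/2 \le 3/2$), so that $1-\gamma/2 > 0$. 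The same chaining, carried out with tail control, gives a sub-Gaussian deviation bound $P\!\left(\sup_{g\in\mathcal{G}_R}|(e,g)_n| \ge t\,\tfrac{\sigma}{\sqrt n}R^{1-\gamma/2}\right) \le 2e^{-Ct^2}$ for $t$ beyond an absolute constant.

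The third step is peeling across scales to restore the $\|g\|_n^{1-\gamma/2}$ normalization. Let $R_{\max} = \sup_{g \in B_G}\|g\|_n$ and slice $B_G$ into dyadic shells $S_s = \{g \in B_G : 2^{-s}R_{\max} < \|g\|_n \le 2^{-s+1}R_{\max}\}$, $s \ge 1$. On $S_s$ one has $\|g\|_n^{1-\gamma/2} \ge (2^{-s}R_{\max})^{1-\gamma/2}$, while the fixed-scale bound applied on $\mathcal{G}_{2^{-s+1}R_{\max}}$ controls the numerator by $\frac{\sigma}{\sqrt n}(2^{-s+1}R_{\max})^{1-\gamma/2}$. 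In the ratio the powers of $R_{\max}$ and the factor $2^{-s(1-\gamma/2)}$ cancel up to the constant $2^{1-\gamma/2}$, so on every shell the ratio is bounded by a fixed multiple of $\frac{\sigma}{\sqrt n}$, uniformly in $s$. A union bound over $s$—letting the deviation level grow slowly with $s$ so the tails $2e^{-Ct_s^2}$ remain summable—then delivers a single sub-Gaussian tail for the full supremum with parameter of order $\sigma n^{-1/2}$, which is the claimed $\mathcal{O}_p(\sigma n^{-1/2})$.

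The main obstacle is the peeling/union step: one must keep the aggregate tail sub-Gaussian with the optimal parameter $\sigma n^{-1/2}$, without accruing the extra logarithmic factor that a naive union over the infinitely many shells would introduce, and one must terminate the peeling at the finest scale resolvable by $n$ sample points so that the empirical semi-metric $\|\cdot\|_n$ stays well-behaved at very small radii. The positivity of the exponent $1-\gamma/2$, inherited from $\gamma<2$, is exactly what makes the per-shell ratios uniformly bounded and the shell sum geometric, so the entire argument hinges on the convergence of the entropy integral.
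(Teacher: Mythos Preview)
The paper does not supply its own proof of this proposition; it is simply quoted as Lemma~8.4 from van~de~Geer's book and used as a black box in the proof of the main theorem. Your chaining-plus-peeling sketch is the standard argument behind that lemma and is essentially correct: the homogeneity reduction to $B_G$, the sub-Gaussian increment structure of $(e,\cdot)_n$ with scale $\sigma n^{-1/2}\|\cdot\|_n$, Dudley's integral on radius-$R$ balls yielding the $R^{1-\gamma/2}$ factor, and the dyadic peeling in $\|g\|_n$ are exactly the ingredients of van~de~Geer's proof. Your caveats about summing tails over shells and truncating at the smallest resolvable empirical scale are the genuine technical points, and they are handled in the cited reference; since the paper itself does not address them, there is nothing further to compare.
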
  
 
\textbf{Proof of the main theorem.} 
We consider the least-squares regularized minimization problem \eqref{BoundEstimate01}. We know that $f_{n}$ satisfies the following  variational form: 
\begin{align}
\left(\mathcal{F}_{T} f_{n}-m, \mathcal{F}_{T} v\right)_{n}+\lambda\left(f_{n}, v\right)_{L^{2}(\Omega)}=0, \quad \forall v \in L^{2}(\Omega).
\end{align}

Using Lemma \ref{lemma1}, we have,
\begin{align}\left(P_{T}^{*}\left(\mathcal{F}_{T} f_{n}-m\right), v\right)_{L^2(\Omega)}+\lambda\left(f_{n}, v\right)_{L^2(\Omega)}=0, \quad \forall v \in L^{2}(\Omega).
\end{align}
This implies that
\begin{align}
 P_{T}^{*} \mathcal{F}_{T}f_{n}+\lambda f_{n}=P_{T}^{*} m.
\end{align}

Since $m_{i}=\mathcal{F}_{T} f^{*}\left(x_{i}\right)+e_{i}$,
\begin{align}
\left(P_{T}^{*} \mathcal{F}_{T}+\lambda I\right) f_{n}=P_{T}^{*} \mathcal{F}_{T} f^{*}+P_{T}^{*} e,
\end{align}
where $e=(e_1,...,e_n)^T$.

It is clear that $\left(\mathcal{F}_{T}^{*} \mathcal{F}_{T}+\lambda I\right) f^{*}=\mathcal{F}_{T}^{*} \mathcal{F}_{T} f^{*}+\lambda f^{*}$, along with the above equality, we can obtain 
\begin{align}
\left(\mathcal{F}_{T}^{*} \mathcal{F}_{T}+\lambda I\right)\left(f_{n}-f^{*}\right)=\left(P_{T}^{*} \mathcal{F}_{T}-\mathcal{F}_{T}^{*} \mathcal{F}_{T}\right)\left(f^{*}-f_{n}\right)-\lambda f^{*}+P_{T}^{*} e.
\end{align}
This gives,
\begin{align}
f_n-f^{*}=\left(\mathcal{F}_{T}^{*} \mathcal{F}_{T}+\lambda I\right)^{-1}\left(\left(P_{T}^{*} \mathcal{F}_{T}-\mathcal{F}_{T}^{*} \mathcal{F}_{T}\right)\left(f^{*}-f_{n}\right)-\lambda f^{*}+P_{T}^{*} e\right)
\end{align}
% \medskip

% To estimate $S_{t}\left(f_{n}-f^{*}\right)$, we have to estimate
% $$
% \left\|S_{t}\left(S_{T}^{*} S_{T}+\lambda I\right)^{-1}\right\|
% $$
Hence,
\begin{align}
    \mathcal{F}_{t}\left(f_{n}-f^{*}\right)&= K_t\left(P_{T}^{*} \mathcal{F}_{T}-\mathcal{F}_{T}^{*} \mathcal{F}_{T}\right)\left(f^{*}-f_{n}\right) -K_t\left(\lambda f^{*}\right) +K_t\left(P_{T}^{*} e\right)\\&= (I)+(II)+(III),
\end{align}
where $K_t=\mathcal{F}_{t}\left(\mathcal{F}_{T}^{*} \mathcal{F}_{T}+\lambda I\right)^{-1}$, $I=K_t\left(P_{T}^{*} \mathcal{F}_{T}-\mathcal{F}_{T}^{*} \mathcal{F}_{T}\right)\left(f^{*}-f_{n}\right)$, $II=-K_t\left(\lambda f^{*}\right)$ and $III=K_t\left(P_{T}^{*} e\right)$.

First, we estimate the term $I$.  Denote $F=\left(P_{T}^{*} \mathcal{F}_{T}-\mathcal{F}_{T}^{*} \mathcal{F}_{T}\right)\left(f^{*}-f_{n}\right)$. Using Lemma \ref{lemma2}, we obtain that 
\begin{align}
  \|F\|_{L^{2}(\Omega)} & \leqslant Cd_{max}^2\left\|f^{*}-f_{n}\right\|_{L^{2}(\Omega)}.
 \end{align}
Applying Lemma \ref{lemma:Kt}, the estimate for the term $I$ is, 
\begin{align}\label{boundI}
 (I) \leqslant C \frac{1}{\lambda^{1-t / 2 T}} d^2_{\max }\left\|f^{*}-f_{n}\right\|_{L^{2}(\Omega)}  \leqslant C \lambda^{t / 2 T} \| f^{*}\|_{L^{2}(\Omega)},
\end{align}
where we use the condition that $ d_{max}^2 \leqslant C \lambda$.

Next, we estimate the term $II$. This part is straightforward since we have 
\begin{align}\label{boundII}
\left\|K_t\left(\lambda f^{*}\right)\right\|_{L^2(\Omega)}  \leqslant C \frac{1}{\lambda^{1-t / 2 T}} \cdot \lambda \| f^{*} \| _{L^{2}(\Omega)} = C \lambda^{t / 2 T}\left\|f^{*}\right\|_{L^2(\Omega)}.
\end{align} 

Finally, the estimation of the term $III$ comes directly from Lemma \ref{lemma3}. 

%\begin{align}
%    E\left(e, S_{T} v_{1}\right)_{n}^{2} &\leqslant C \frac{\sigma^{2}}{n} \frac{1}{\lambda^{d / 4}}\left(\left\|S_{T} v_{1}\right\|_{L^2(\Omega)}^{2}+\lambda\left\|v_{1}\right\|_{L^2(\Omega)}^{2}\right)\\
%    &\leq C \frac{\sigma^{2}}{n} \frac{1}{\lambda^{d / 4}}\left(\lambda^{t / T-1} \| v \|_{L^2(\Omega)}^{2}+\lambda \cdot \lambda^{t/T-2} \|v\|^{2}_{L^2(\Omega)}\right)\\
%&=C \frac{\sigma^{2}}{n} \frac{1}{\lambda^{d / 4}} \lambda^{t / T-1}\|v\|^{2}_{L^2(\Omega)}
%\end{align}
%\begin{align}
  %  &\Rightarrow E(K_t P_{T}^{*} e, v)_{L^2(\Omega)}^{2} \leqslant C \frac{\sigma^{2}}{n} \frac{1}{\lambda^{d / 4}} \lambda^{t/T-1}\|v\|_{L^2(\Omega)}^{2}\\
% &\Rightarrow E\left|(K_t P_{T}^{*} e, v)_{L^2(\Omega)}\right| \leqslant C \sigma n^{-1 / 2} \lambda^{-d / 8} \lambda^{t/(2 T)-1/2} \|v\|_{L^2(\Omega)}\\
% &\Rightarrow E\|K_t P_{T}^{*} e\|_{L^2(\Omega)}=\sup _{v} E \frac{\|(K_t P_{T}^{*} e, v) \|}{\|v\|_{L^2(\Omega)}} \leqslant C \sigma n^{-1 / 2} \lambda^{-d / 8} \lambda^{t /(2 T)-1 / 2}
%&\text{\zj{it's not straight forward!}}\\
%&\Rightarrow E\|K_t P_{T}^{*} e\|_{L^2(\Omega)}\leqslant C \frac{\sigma^{2}}{n} \frac{1}{\lambda^{d / 4}} \lambda^{t/T-1} 
%\end{align}

Combining \eqref{boundI}, \eqref{boundII} and \eqref{boundIII}, we can prove that 
\begin{align}
    E\left\|\mathcal{F}_{t} f_{n}-\mathcal{F}_{t} f^{*}\right\|_{L^2(\Omega)} \leqslant C \lambda^{t / 2 T}\left\|f^{*}\right\|_{L^2(\Omega)}+ C \frac{\sigma}{n^{1/2}} \frac{1}{\lambda^{d / 8}} \lambda^{t/2T-1/2}.
\end{align}
In addition, if we take $\lambda^{1/2+d / 8}=\sigma n^{-1 / 2}\left\|f^{*}\right\|_{L^2(\Omega)}^{-1}$,
\begin{align}
E\left\|\mathcal{F}_{t} f_{n}-\mathcal{F}_{t} f^{*}\right\|_{L^2(\Omega)} \leq C \lambda^{t / 2 T}\| f^{*} \|_{L^2(\Omega)}. 
\end{align} 

Then \eqref{eqn:time_t_estimate} and \eqref{eqn:time_t_estimate2} are proved.

Next, we will prove the convergence in probabilities \eqref{eqn:time_t_estimate3} for the second case. As parts I and II only consist of deterministic terms, we just need to estimate the convergence in probability of the term III. To achieve this,  we will apply the theory of empirical process as described in \cite{Geer2000}, see Proposition \ref{Geer-entropy}.

In Proposition \ref{Geer-entropy}, let $G=H^2(\Omega)$, 
According to Proposition \ref{para-lem:2.3-entropy}, we have $\log N\left(\varepsilon, B_G,\left\|\cdot\right\|_{n}\right) \leqslant C \varepsilon^{-d/2}$.  For any $ v \in L^2(\Omega)$, let $g\triangleq \mathcal{F}_{T}\left(\mathcal{F}_{T}^{*} \mathcal{F}_{T}+\lambda I\right)^{-1} \mathcal{F}_{t} v \in H^{2}(\Omega)$. We can apply Proposition \ref{Geer-entropy} to obtain 
\begin{align}
\sup_{g} \frac{\left|(e, g)_{n}\right|}{\|g\|_{n}^{1-\frac{d}{4}}\|g\|_{H^2}^{d/4}}=\mathcal{O}_{p}\left(\sigma n^{-1 / 2}\right).
\end{align}
Then we apply Lemma \ref{lemma:Kt} and Lemma \ref{lemma:STST}, we have, 
\begin{align}
 (e, g)_{n}&=\mathcal{O}_{p}\left(\sigma n^{-1 / 2}\left\|g\right\|^{1-\frac{d}{4}}_n \|g\|^{d / 4}_{H^{2}} \right)\nonumber\\
&=\mathcal{O}_{p}\left(\sigma n^{-1 / 2}\left(\lambda^{t / 2 T-1 / 2}\right)^{1-\frac{d}{4}}\left(\lambda^{t / 2 T-1}\right)^{d / 4} \| v\|_{ L^2(\Omega)}\right)\nonumber\\
&=\mathcal{O}_{p}\left(\sigma n^{-1 / 2} \quad \lambda^{t / 2 T} \quad \lambda^{-\left(\frac{1}{2}+\frac{d}{8}\right)} \| v\|_{L^2(\Omega)}\right).
\end{align}
Since the optimal parameter $\lambda^{\frac{1}{2}+\frac{d}{8}}=\mathcal{O}\left(\sigma_{n}^{-1 / 2}\left\|f^{*}\right\|^{-1}_{L^{2}(\Omega)}\right)$, then, 
\begin{align}\label{term3}
(e,g)_n=\mathcal{O}_{p} \left(\lambda^{t / 2 T}\left\|f^{*}\right\|_{L^{2}(\Omega)}\|v\|_{L^2(\Omega)}\right).
\end{align}
For the term III, 
\begin{align}
(K_t P_{T}^{*} e, v)_{L^2(\Omega)} =\left(e, \mathcal{F}_{T} v_{1}\right)_{n} =\left(e, g\right)_{n}.
\end{align}
Apply the estimate \eqref{term3},
\begin{align}
(K_t P_{T}^{*} e, v)_{L^2(\Omega)}=\mathcal{O}_{p} \big(\lambda^{t / 2 T}\left\|f^{*}\right\|_{L^2(\Omega)} \| v \|_{L^2(\Omega)}\big).
\end{align}
Since $v\in L^2(\Omega)$ is an arbitrary function, then
\begin{align}
\left\|K_t P_{T}^{*} e\right\|_{L^2(\Omega)}=\mathcal{O}_{p} \big(\lambda^{t / 2 T}\| f^{*} \|_{{L}^{2}(\Omega)}\big).
\end{align}
This will give the estimate \eqref{eqn:time_t_estimate3}.

\section{Methodology}
In this section, we introduce a numerical method for solving the backward heat conduction problems \eqref{zz0} with noisy final time measurement. Our method consists of two parts. First, we will present a numerical method to solve the regularization problem \eqref{para-ori} for a given regularization parameter $\lambda$. Then, we will propose an iterative algorithm to determine the optimal $\lambda$, inspired by the convergence analysis in Section \ref{sec:analysis}.

\subsection{Iterative method for the inverse problem}
 We first define the functional $\mathcal{J}$ as follows:
\begin{align}
	\mathcal{J}[f]=\|\mathcal{F}_Tf-m\|^2_n+\lambda_n \|f\|_{L^2(\Omega)}^2.
\end{align}	 
Then we just need to solve the following misfit functional:
\begin{align}
	\mathop {\rm min}\limits_{f\in L^2(\Omega)}  \mathcal{J}[f].
\end{align}
\begin{lemma}\label{Fdifferentiable}
	The misfit functional $\mathcal{J}[f]$ is Fr$\acute{e}$chet-differentiable. 
\end{lemma}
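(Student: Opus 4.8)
The plan is to verify Fréchet differentiability directly from the definition, by expanding the increment $\mathcal{J}[f+h]-\mathcal{J}[f]$ and separating the part that is linear in $h$ from a remainder that is quadratic in $h$. Since both terms of $\mathcal{J}$ are quadratic forms composed with bounded affine maps, the expansion
\begin{align}
\mathcal{J}[f+h]-\mathcal{J}[f]
= \underbrace{2(\mathcal{F}_Tf-m,\mathcal{F}_Th)_n + 2\lambda_n(f,h)_{L^2(\Omega)}}_{=:\,D\mathcal{J}[f](h)}
 + \|\mathcal{F}_Th\|_n^2 + \lambda_n\|h\|_{L^2(\Omega)}^2
\end{align}
is exact. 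It therefore suffices to show that $h\mapsto D\mathcal{J}[f](h)$ is a bounded linear functional on $L^2(\Omega)$ and that the last two terms are $O(\|h\|_{L^2(\Omega)}^2)$, hence $o(\|h\|_{L^2(\Omega)})$ as $h\to 0$.

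The key step is the continuity on $L^2(\Omega)$ of the point evaluations $f\mapsto(\mathcal{F}_Tf)(x_i)$, which was essentially already established in Section~2: the analytic semigroup estimate gives $\|\mathcal{F}_Tf\|_{H^2(\Omega)}\le C_T\|f\|_{L^2(\Omega)}$, and the continuous embedding $H^2(\Omega)\hookrightarrow C(\bar\Omega)$ together with $\sum_{i=1}^n w_i^2=\mathcal{O}(1)$ yields $\|\mathcal{F}_Th\|_n\le C\|\mathcal{F}_Th\|_{C(\bar\Omega)}\le CC_T\|h\|_{L^2(\Omega)}$. From this, Cauchy--Schwarz gives
\begin{align}
|D\mathcal{J}[f](h)|
\le 2\|\mathcal{F}_Tf-m\|_n\,\|\mathcal{F}_Th\|_n + 2\lambda_n\|f\|_{L^2(\Omega)}\|h\|_{L^2(\Omega)}
\le C(f,m,\lambda_n)\,\|h\|_{L^2(\Omega)},
\end{align}
so $D\mathcal{J}[f]$ is bounded and linear, and likewise $\|\mathcal{F}_Th\|_n^2+\lambda_n\|h\|_{L^2(\Omega)}^2\le (C^2C_T^2+\lambda_n)\|h\|_{L^2(\Omega)}^2$. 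Dividing the increment by $\|h\|_{L^2(\Omega)}$ and letting $h\to0$ shows the remainder vanishes to first order, which is precisely Fréchet differentiability, with derivative $D\mathcal{J}[f](h)=2(\mathcal{F}_Tf-m,\mathcal{F}_Th)_n+2\lambda_n(f,h)_{L^2(\Omega)}$; using the adjoint $P_T^*$ from Lemma~\ref{lemma1} one can rewrite the Riesz representative as $\nabla\mathcal{J}[f]=2P_T^*(\mathcal{F}_Tf-m)+2\lambda_nf$, which is the form needed for the gradient-based iteration in the next subsection.

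I do not anticipate a genuine obstacle: $\mathcal{J}$ is a second-order polynomial in $f$ composed with bounded linear operators, so the only point requiring care is quoting the correct smoothing-and-embedding chain that makes the discrete misfit $\|\mathcal{F}_Tf-m\|_n^2$ well defined and Lipschitz-bounded in $f$; everything else reduces to Cauchy--Schwarz. If desired, the same expansion shows $\mathcal{J}$ is in fact twice Fréchet differentiable with constant second derivative $D^2\mathcal{J}[f](h,h)=2\|\mathcal{F}_Th\|_n^2+2\lambda_n\|h\|_{L^2(\Omega)}^2$, but this is not needed for the stated lemma.
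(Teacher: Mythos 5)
Your proof is correct and follows essentially the same route as the paper: both exploit the quadratic structure of $\mathcal{J}$ and compute the derivative directly from the definition. Your version is in fact the more complete one, since the paper only evaluates the directional limit $\lim_{t\to 0}(\mathcal{J}[f+tv]-\mathcal{J}[f])/t$ and never checks that the remainder is $o(\|h\|_{L^2(\Omega)})$ or that the candidate derivative is a bounded functional, both of which you verify via the chain $\|\mathcal{F}_Th\|_n\le C\|\mathcal{F}_Th\|_{H^2(\Omega)}\le C\,C_T\|h\|_{L^2(\Omega)}$. You also correctly retain the factor $2$ in $D\mathcal{J}[f](h)=2(\mathcal{F}_Tf-m,\mathcal{F}_Th)_n+2\lambda_n(f,h)_{L^2(\Omega)}$ (the paper drops it, harmlessly for gradient descent since it is absorbed into the step size) and correctly identify the Riesz representative through the discrete adjoint $P_T^*$ of Lemma \ref{lemma1} rather than the $L^2$ adjoint $\mathcal{F}_T^*$, a distinction the paper's displayed computation glosses over.
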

\begin{proof} 
	From the definition of Fr$\acute{e}$chet differentiability, we need to compute
	\begin{align}\label{Frechet-differentiable}
		d\mathcal{J}[f](v)&=\lim_{t\rightarrow 0} \frac{\mathcal{J}[f+tv]-\mathcal{J}[f]}{t}\nonumber\\
		&= (\mathcal{F}_Tf-m,\mathcal{F}_Tv)_n + \lambda_n(f,v)\nonumber\\
		&= \big(\mathcal{F}_T^*(\mathcal{F}_Tf-m),v\big) + \lambda_n(f,v)\nonumber\\
		&= \big(\mathcal{F}_T^*(\mathcal{F}_Tf-m)+ \lambda_n f,v\big), 
	\end{align}
	where $v\in L^2(\Omega)$. In \eqref{Frechet-differentiable}, the second equality is easy to get from the quadratic form of the misfit functional $\mathcal{J}[f]$. Thus, one can directly obtain that 
	\begin{align}
		d\mathcal{J}[f]=\mathcal{F}_T^*(\mathcal{F}_Tf-m)+ \lambda_n f. \label{dJf}
	\end{align}
\end{proof}
The formula \eqref{dJf} in Lemma \ref{Fdifferentiable} allows us to apply the gradient descent method to minimize the discrepancy functional $\mathcal{J}[f]$. Let $f_{0}$ be an initial guess and $f_k$ denote the solution of the least-squares regularized minimization problem \eqref{para-ori} at the $k$-th iteration step. We update the iterative solution by 
\begin{align}
	f_{k+1}=f_{k}-\beta d\mathcal{J}[f_{k}], \quad \forall k\in \mathbb{N},
\end{align}
where ${\beta}$ is the step size.

We employ the backward Euler scheme to discretize the heat equation by the linear finite element method. Given a fully discrete scheme with linear finite element space $V_{fem}$, we define the standard approximate forward operator $\mathcal{F}_{fem}$: $L^2(\Omega)\rightarrow V_{fem}$. This operator, given any initial function $f$, provides the numerical solution at the final time, i.e., $\mathcal{F}_{fem}f$ gives the numerical solution at the final time for a given initial function $f$.  In such a discrete setting,  we turn to solve the following misfit functional:
\begin{align}
	\mathop {\rm min}\limits_{f\in V_{fem}}  \mathcal{J}_{fem}[f],
\end{align}
where the functional $\mathcal{J}_{fem}[f]$ has the form $\mathcal{J}_{fem}[f]=\|\mathcal{F}_{fem}f-m\|^2_n+\lambda_n \|f\|_{L^2(\Omega)}^2$. We compute the 
Fr$\acute{e}$chet derivative of $\mathcal{J}_{fem}[f]$ in the same spirit of \eqref{dJf} and obtain the following iterative scheme:
\begin{align}\label{eqn:back_iteration}
	f_{k+1}=f_{k}-\beta d\mathcal{J}_{fem}[f_{k}], \quad \forall k\in \mathbb{N},
\end{align}
where $\beta$ is the step size, $d\mathcal{J}_{fem}[f]=\mathcal{F}_{fem}^*(\mathcal{F}_{fem}f-m)+ \lambda_n f$, $\mathcal{F}_{fem}^*$ is the discrete operator for the $\mathcal{F}^*$ and $f_{0}$ is an initial guess.

 \subsection{An a posterior estimate of the regularization parameter $\lambda$}
Theorem \ref{main-thm}  and Theorem \ref{thm:convergence} indicate that the optimal choice of regularization parameter has the form:
\begin{align}\label{opti-lamda}
    \lambda^{1 / 2+d / 8}=\mathcal{O}( \sigma n^{-1 / 2}\left\|f^{*}\right\|_{L^2(\Omega)}^{-1}).
\end{align}
The corresponding error will be 
\begin{align}\label{opti-error}
    \|\mathcal{F}_{T} f_n-\mathcal{F}_{T} 
 f^*\|_{L^2(\Omega)}=\mathcal{O}(\lambda^{1/2} \|f^{*}\|_{L^2(\Omega)}).
\end{align} 
This estimate provides a rough order of the optimal parameter $\lambda$. Determining the optimal value of $\lambda$ can be challenging, and the optimal $\lambda$ may vary as $T$ changes. This variation can also be observed through numerical experiments. Obtaining accurate estimations of all the constants in this paper is necessary to determine the optimal parameter, but it is not an easy task. In this section, we present an alternative method for selecting the optimal value of $\lambda$, which varies as the final time $T$ changes.

This optimal parameter $\lambda$ depends on $T$ since $\|\mathcal{F}_Tf^*\|_{H^2(\Omega)}\leq C_T\|f^*\|_{L^2(\Omega)}$, where $C_T$ depends on $T$. Let $u^*=\mathcal{F}_{T} f^*\in H^2(\Omega)$ and we can approximately estimate this constant $C_T$ by $\frac{\|u^*\|_{H^2(\Omega)}}{\|f_i\|_{L^2(\Omega)}}$. Counting this constant into the proofs of Theorems \ref{thm:convergence} and \ref{main-thm}, we obtain that the optimal regularization parameter $\lambda$ has the following form:
\begin{align}\label{opti-lamda-final}
    \lambda^{1 / 2+d / 8}=\mathcal{O}( \frac{\|u^*\|^{d/4}_{H^2(\Omega)}}{\|f^{*}\|^{d/4}_{L^2(\Omega)}}\sigma n^{-1 / 2}\left\|f^{*}\right\|_{L^2(\Omega)}^{-1}),
\end{align}

%Since $u^*=\mathcal{F}_{T} f^*\in H^2(\Omega)$, we consider a simpler denoising problem where $\tilde u$ is the solution to the following:

%\begin{align}\label{denoise}
%    \min _{u \in H^{2}(\Omega)}\left\|u-m\right\|_{n}^{2}+\lambda_{denoise}\|u\|_{H^2(\Omega)}^{2}.
%\end{align} 
%\cite{Chen-Zhang} implies that the optimal error $\|\tilde u-u^*\|_{L^2(\Omega)}$ of the above problem is,
%\begin{align}\label{denoise-error}
%    \|\tilde u-u^*\|_{L^2(\Omega)}=\mathcal{O}(\sigma n^{-1 / 2}\left\|u^{*}\right\|_{H^2(\Omega)}^{-1})^{\frac{4}{4+d}}\|u^*\|_{H^2(\Omega)}.
%\end{align} 
%Intuitively, the error of \eqref{para-ori} should not be smaller than the error of \eqref{denoise}. 

%If we chose the regularization parameter 
%\begin{align}\label{opti-lamda-final}
%    \lambda^{1 / 2+d / 8}=\mathcal{O}( \frac{\|u^*\|^{d/4}_{H^2(\Omega)}}{\|f^{*}\|^{d/4}_{L^2(\Omega)}}\sigma n^{-1 / 2}\left\|f^{*}\right\|_{L^2(\Omega)}^{-1}),
%\end{align}
%then both two errors \eqref{opti-error} and \eqref{denoise-error} are comparable. 

In this way, we establish an appropriate method for estimating the correct time-dependent constant in \eqref{opti-lamda}. We can see from the numerical examples that this estimate is a good choice. This approach offers an a prior estimate of the optimal regularization parameter $\lambda$ with knowledge of the noise level $\sigma$ and true solution $f^*$. In many industrial applications, these prior parameters are exactly what one wants to know, and many methods assume the knowledge of this a prior information. In the following section, we will present an algorithm that determines the regularization parameter $\lambda$ in an a posteriori manner, without requiring prior knowledge of the noise level $\sigma$ or the true solution $f^*$.

In practical settings, since the initial value $f^*$ and noise level $\sigma$ are unknown, we cannot directly apply the suggested formula to find the optimal $\lambda$ from Eq.\eqref{opti-lamda-final}.  To address this issue, we propose a self-adaptive method inspired by fixed-point iteration to determine the optimal $\lambda$. Specifically, we begin with an initial guess, such as $\lambda_{ 0}=1$, and solve for $f_0$. After solving $f_i$ at the $i$-th step, we update $\lambda$ as follows:
\begin{align}\label{eqn:opt_lam_alg}
	\lambda_{ i+1}^{1 / 2+d / 8}=\frac{\|u^*\|^{d/4}_{H^2(\Omega)}}{\|f_i\|^{d/4}_{L^2(\Omega)}}n^{-1 / 2}\left\|\mathcal{F}_{T} f_{i}-m\right\|_{n}\left\|f_{i}\right\|_{L^{2}(\Omega)}^{-1}.
\end{align}
In the above formula, we estimate $\|u^*\|_{H^2(\Omega)}$ solely from the observation data $m$ using the denoising algorithm proposed in \cite{Chen-Zhang}. We will terminate the iteration of $\lambda$ once $\left\|f_{i}\right\|_{L^{2}(\Omega)}$ converges. The iteration details are presented in the following diagram, and the numerical performance of this iteration will be discussed in the following section.

\begin{algorithm}[h]
	\SetAlgoLined
	\textbf{Input}: Observation data $m$,  number of observation data $n$, error threshold of $\lambda$ $tol_\lambda$.\\
    Estimate $\|u^*\|_{H^2(\Omega)}$ from the data $m$ by the denoising algorithm proposed in \cite{Chen-Zhang}, denote this estimation by $H_{est}$;\\
	Setup initial guess $\lambda_{0}\leftarrow n^{-4 /(d+4)}$;\\
	Solve $f_0$ from $m$ with parameter $\lambda=\lambda_{0}$;\\
	$\lambda_{1}\leftarrow (\frac{H^{d/4}_{est}}{\|f_0\|^{d/4}_{L^2(\Omega)}}n^{-1 / 2}\left\|\mathcal{F}_{T} f_{0}-m\right\|_{n}\left\|f_{0}\right\|_{L^{2}(\Omega)}^{-1})^{\frac{1}{1/2+d/8}}$;\\
	$j\leftarrow 1$;\\
	\While{$|\lambda_{j}-\lambda_{j-1}|>tol_\lambda$}{
	
		Solve $f_{j}$ from $m$ with parameter $\lambda=\lambda_{j}$;\\
		$\lambda_{j}\leftarrow (\frac{H^{d/4}_{est}}{\|f_{j}\|^{d/4}_{L^2(\Omega)}}n^{-1 / 2}\left\|\mathcal{F}_{T} f_{j}-m\right\|_{n}\left\|f_{j}\right\|_{L^{2}(\Omega)}^{-1})^{\frac{1}{1/2+d/8}}$;\\
		$j\leftarrow j+1$;\\
	}
	
	\textbf{Output}: parameter $\lambda_{j}$, computed the initial function $f_{j}$.
	\caption{\textbf{An iterative algorithm of finding optimal parameter $\lambda$ and recovering initial value .}}
	\label{alg_lambda}
\end{algorithm}

\section{Numerical examples} 

In this section, we demonstrate the effectiveness of our proposed regularization method for solving BHCPs through numerical examples. Since our estimate only assumes $L_2$ initial data, we explore two types of initial data: a smooth function (such as a product of two $\sin$ functions) and a discontinuous bounded function (such as the indicator function of an $A$ shaped domain). We will start with the case of smooth initial data. First, we examine the stochastic convergence of our method, as outlined in our main theorem. Next, we demonstrate that our estimate provides a near-optimal choice of the regularization parameter. Additionally, we propose a scheme that uses fixed-point integration to determine optimal regularization parameters, even without \textit{a-priori} knowledge of the initial data. Finally, we apply our algorithm to invert discontinuous initial data.

%\textcolor{red}{Remark: need numerical method for forward problem and inverse problem.} 

% Notes for ZJ:
% \begin{itemize}
%     \item We should stay on smooth function, for section 4.1-4.3. Letter A is the last example. Do not be aggressive:), need to check
%     \item put the last one also T problem for letter A: for various T the inversion result. problem should degenerate when t grows.
%     \item 
% \end{itemize}
% Other notes:
% \begin{itemize}
%     \item Shall we investigate the case for less observation number?
% \end{itemize}
\subsection{Configurations and performance in recovering $L_2$ initial data}
 For the first example, we consider a smooth initial condition on the $\Omega=[0,\pi]^2$ domain, i.e.,
\begin{align}\label{eq:smooth}
    f(x,y)=\sin(2x)\sin(2y) \quad (x,y)\in [0,\pi]^2,
\end{align}
see Fig.\ref{fig:smooth}(c).
To generate a synthetic observation, we first solve Eq.\eqref{zz0} for $t=[0,0.1]$ by finite element methods with $h=\frac{\pi}{16}$ in space discretizaion and $\delta t=1\times 10^{-3}$ in time. Then we make observations of $u(\cdot,t=0.1)$ at $n=20\times20$ equiv-distance distributed points $\{x_i\}_{i=1}^N$ over $\Omega$. At the $i$-th observation point, $x_i$, the observation is assumed to deviate from the ground truth $u(x_i,T)$ by an independent noise, i.e.,
\begin{align}\label{eq:obs_noise}
    m_i=u(x_i,T)+\sigma N_i,
\end{align}
where $\{N_i\}$ are i.i.d. standard normal distribution and $\sigma=0.05>0$ represents the standard derivation of observation error at each point.
\medskip
 
Then we present the performance of the regularization method in recovering the initial values. Fig.\ref{fig:smooth}(a) shows the noisy observation of $u$ at time $T=0.1$. Since the selection in $\lambda$ is crucial in the inverse initial value problem, we utilize the fixed point iteration proposed in Alg.\ref{alg_lambda} to find the optimal regularization. The detailed performance of the iterative algorithm \ref{alg_lambda} will be discussed in the second part of Sec.\ref{sec:optlamb}.  The result of inversion with
final step $\lambda=3.936\times 10^{-4}$ is presented in Fig. \ref{fig:smooth}(b). Despite the noisy observation, we find that our inversion is smooth and successfully recovers the ground truth in Fig.\ref{fig:smooth}(c).  
	 \begin{figure}[htbp]
	 	\centering
	 	\begin{subfigure}{0.32\textwidth}
	 		\includegraphics[width =\linewidth]{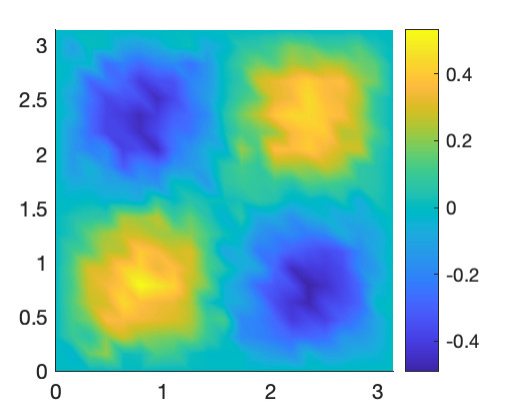}
	 		\caption{observation at $t=0.1$}
	 	\end{subfigure}
	 	\begin{subfigure}{0.32\textwidth}
	\includegraphics[width =\linewidth]{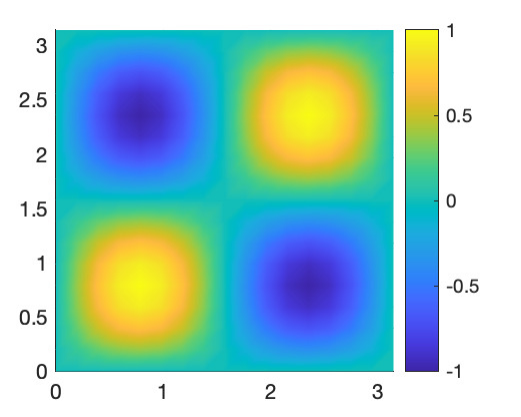}
	\caption{regularized inversion}
\end{subfigure}
	 	\begin{subfigure}{0.32\textwidth}
	\includegraphics[width =\linewidth]{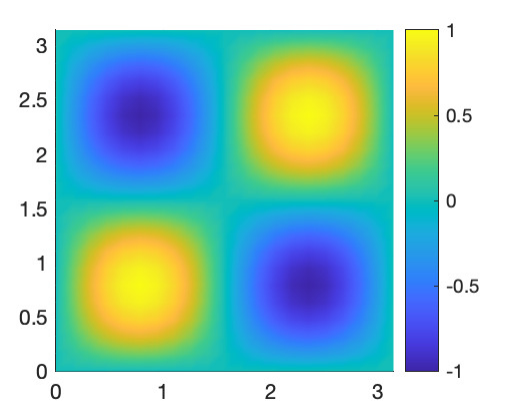}
	\caption{ground truth}
\end{subfigure}
	 	\caption{Performance of regularization algorithm when recovering smooth $L_2$ initial value } 
	 	\centering
	 	\label{fig:smooth}
	 \end{figure}

\paragraph{Verification of convergence}
Following up, we verify the convergence result in Sec.\ref{sec:analysis}.  

The interior time estimate in Eq.\eqref{eqn:time_t_estimate} is validated in Fig.\ref{fig:convergence}(a). The y-axis of the plot is in log scale, and the near-linear line indicates that the error exponentially decays as $t$ approaches the terminal time of $0.1$. This can be attributed to the fact that error grows exponentially fast when propagating back to the initial values.

To investigate the stochastic convergence in Theorem \ref{thm:convergence}, we generate $10000$ independent realizations of noisy observations in \eqref{eq:obs_noise} and record the errors. In Fig.\ref{fig:convergence}(b), we compare the errors with a standard normal distribution by using a quantile-quantile plot (Q-Q plot). The Q-Q plot is a statistical method for comparing two probability distributions by plotting their quantiles against each other. We can see a straight line in Fig.\ref{fig:convergence}(b), which indicates that the distribution of errors is similar to a normal distribution. This verifies the Gaussian tail estimate in Eq.\eqref{eqn:time_t_estimate3}.

\begin{figure}[htbp]
    \centering
    \begin{subfigure}{0.45\textwidth}
    \includegraphics[width=\linewidth]{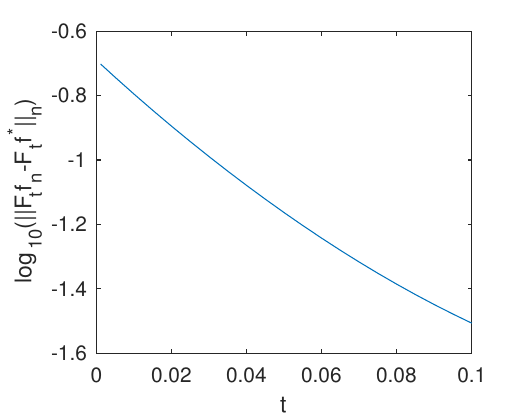}
    \caption{$||\mathcal{F}_t f_* - \mathcal{F}_t f||_{n}$ in one realization, y-axis is in log scale}
    \end{subfigure}~
    \begin{subfigure}{0.45\textwidth}
    \includegraphics[width=\linewidth]{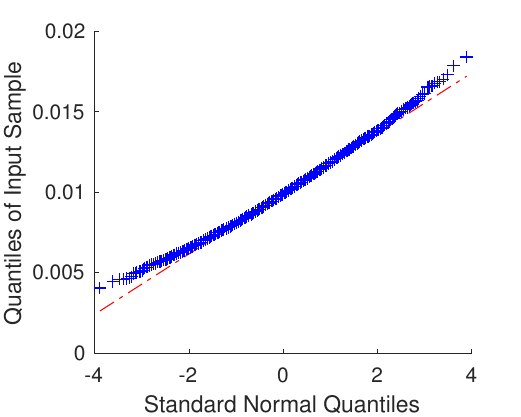}
    \caption{Q-Q plot of $||\mathcal{F}_Tf_* - \mathcal{F}_Tf||_{n}$}
    \end{subfigure}
    \caption{Convergence verification when recovering smooth $L_2$ initial value}
    \label{fig:convergence}
\end{figure}

\subsection{Optimality of the regularization parameter $\lambda$}\label{sec:optlamb}
One of the main contributions in our analysis in Sec.\ref{sec:analysis} is to provide a scale estimate of the optimal regularization parameter for the BHCPs. This estimate is valuable for practitioners as it helps them choose an appropriate value of the regularization parameter for their specific problems.

\paragraph{Verification of scale estimation}

To begin, we recall the estimation for the optimal $\lambda$ in \eqref{optimal_lambda}. Specifically, when we take $d=2$, we have the following: 
\begin{align}\label{optima_lambda2d}
    \lambda=\mathcal{O}( \sigma^{4/3} n^{-2 / 3}\left\|f^{*}\right\|_{L^2(\Omega)}^{-4/3}). 
\end{align}
In Fig.\ref{fig:optimal_lambda}, we demonstrate the application of various configurations for recovering smooth initial data. 
Specifically, we begin by setting $\sigma=10^{-1}$ and $n=10^4$. To find the optimal value of $\lambda$ in practice, we apply our inverse algorithm (excluding the fixed point interactive algorithm in Alg.\ref{alg_lambda}) with $\lambda$ ranging from $10^{-5}$ to $10^{-2}$. We then determine the corresponding $\lambda_*$ that minimizes the $L_2$ error between the inversion $f_n$ and the ground truth $f^*$.

In Fig.\ref{fig:optimal_lambda}(a), we keep the observation number $n$ and  ground truth $f^*$ unchanged while testing for varying values of $\sigma$ between $2.5\times10^{-2}$ and $0.8$, and determine the corresponding $\lambda_*$. The fitted slope is $1.12$, which is close to the analytical suggestion $\frac{4}{3}$ in Eq.\eqref{optima_lambda2d}. Similarly, in Fig.\ref{fig:optimal_lambda}(b) and (c), we test for different observation numbers $n$ and $L_2$ norm of the initial value $||f||_{L_2}$ while keeping the remaining configuration constant. In both cases, the practical best $\lambda$ adheres to the scale suggested analytically in Eq.\eqref{optima_lambda2d}.

\begin{figure}[htbp]
    \centering
     \begin{subfigure}{0.31\textwidth}
    \includegraphics[width=\linewidth]{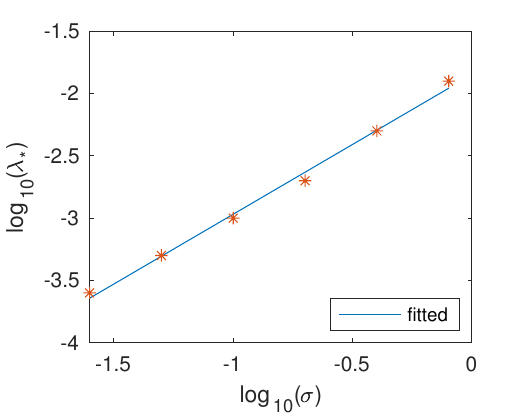}
    \caption{$\sigma$, fitted slope $1.12$}
    \end{subfigure}
   \begin{subfigure}{0.31\textwidth}
    \includegraphics[width=\linewidth]{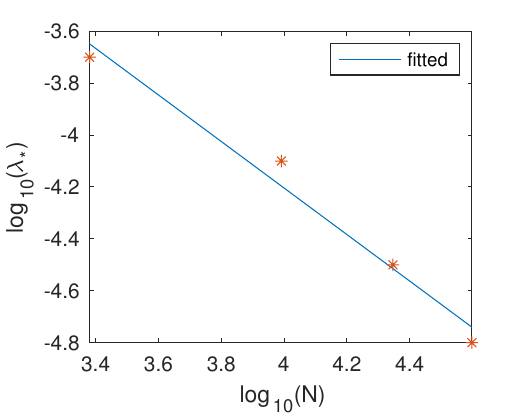}
    \caption{$n$, fitted slope $-0.8963$}
    \end{subfigure}
    \begin{subfigure}{0.31\textwidth}
    \includegraphics[width=\linewidth]{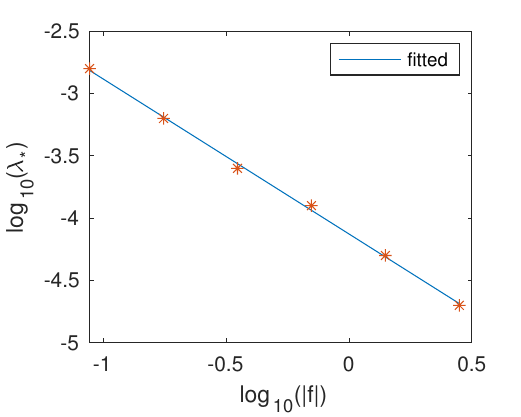}
    \caption{$||f^*||_{L_2}$, fitted slope $-1.2434$}
    \end{subfigure}
    \caption{Validation of optimal scale estimate of $\lambda$ \eqref{optimal_lambda} }
    \label{fig:optimal_lambda}
\end{figure}

\paragraph{Self adaptive method in finding optimal $\lambda$.}  In Fig.\ref{fig:adaptive_lambda}(a), we demonstrate that our iterative algorithm monotonically decreases the $l_2$ distance between recovered final time solution $Sf_i$ and the observation in $9$ steps. In Fig.\ref{fig:adaptive_lambda}(c),(d),(e), (f), we present the inversion result at $1$, $2$ $4$, and $6$ steps, respectively. Fig.\ref{fig:adaptive_lambda}(b) shows the $l_2$ distance to the observation at the $i$-th step against the difference between $\lambda_i$ and optimal $\lambda$. By iterating for $\lambda$, we can see that our algorithm gradually recovers the initial data $f$ in \eqref{eq:smooth}.
 
\begin{figure}[htbp]
    \centering
     \begin{subfigure}{0.49\textwidth}
    \includegraphics[width=\linewidth]{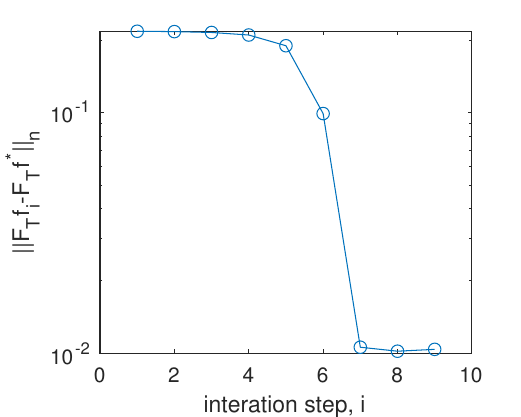}
    \caption{$l_2$ distance in each iteration}
    \end{subfigure}
    \begin{subfigure}{0.49\textwidth}
    \includegraphics[width=\linewidth]{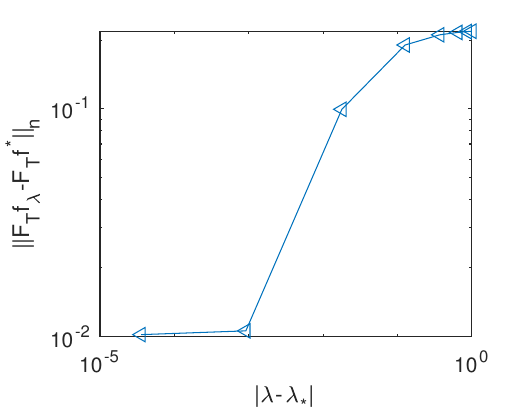}
    \caption{$l_2$ distance  vs $\lambda$ derivation}
    \end{subfigure}\\
    
   \begin{subfigure}{0.49\textwidth}
    \includegraphics[width=\linewidth]{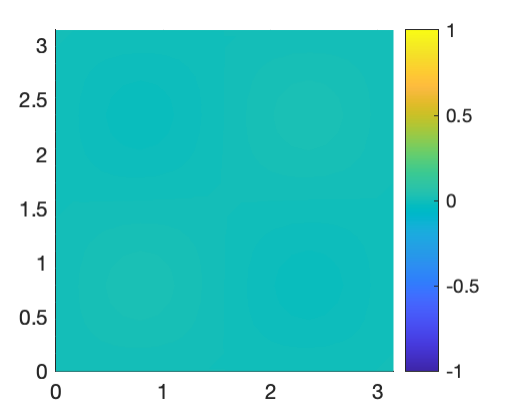}
    \caption{Inversion with $\lambda_0=1$}
    \end{subfigure}
    \begin{subfigure}{0.49\textwidth}
    \includegraphics[width=\linewidth]{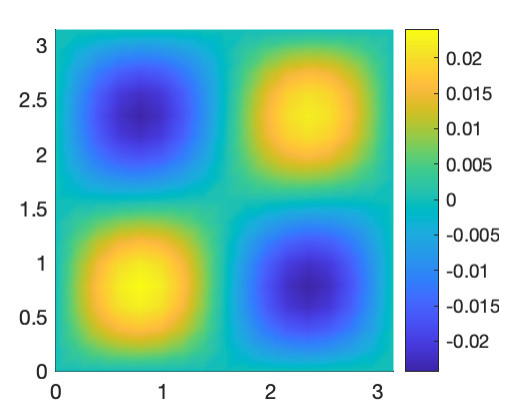}
    \caption{Inversion with $\lambda_2=0.6836$}
    \end{subfigure}\\
    
    \begin{subfigure}{0.49\textwidth}
    \includegraphics[width=\linewidth]{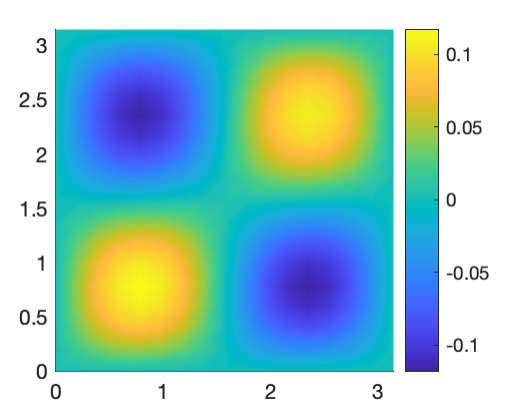}
    \caption{Inversion with $\lambda_4=0.1634$}
    \end{subfigure}
    \begin{subfigure}{0.49\textwidth}
    \includegraphics[width=\linewidth]{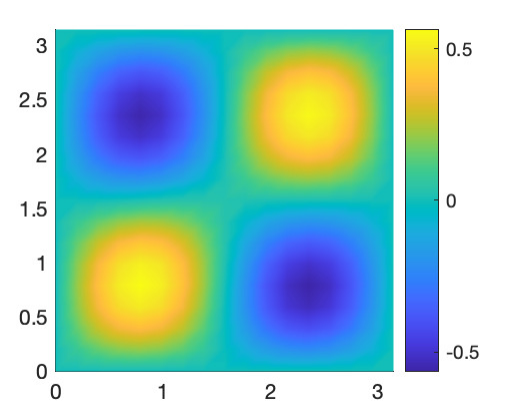}
    \caption{Inversion with $\lambda_6=0.0183$}
    \end{subfigure}
\caption{Self-adaptive method in finding the optimal $\lambda$, final result at step $9$ shown in Fig.\ref{fig:smooth}(c)}
    \label{fig:adaptive_lambda}
\end{figure}

\subsection{Performance in recovering discontinuous initial data}
In the second example, we consider the initial condition $f$ as the indicator function of an \textbf{A}-shape sub-domain, as shown in Fig.\ref{fig:A} (b). Compared to smooth initial data, the sharp edges of $f$ lead to a slower decay of the spectrum. In the forward problem, the higher frequency data decays rapidly against $T$, making it difficult to track. Thus, in this example, the standard derivation of the noise $\sigma$ is set to $10^{-4}$, and the final observation is at $T=0.05$ with $n=50\times 50$ equally spaced points. Fig.\ref{fig:A} (a) illustrates the generated observations. The remaining configuration is identical to the one used for smooth data presented in Fig.\ref{fig:smooth}.

In Fig.\ref{fig:A}(d-f), we present the step-wise regularized inversion result. Our iterative algorithm takes $2$ steps to achieve a near-optimal $\lambda=1.2644\times 10^{-6}$. The observation is two-fold. Firstly, we can see that the inversion cannot recover the sharp edge of $A$ but can accurately identify the shape of the sub-domain. This is due to the ill-posed nature of the problem. However, in contrast to the inversion result with an incorrect $\lambda$, as depicted in Fig.\ref{fig:A}(d-e), the iterative method Alg.\ref{alg_lambda}, based on our theoretical analysis in Sec.\ref{sec:analysis}, plays a crucial role in recovering $L_2$ non-smooth data.

In addition, we validate the asymptotic estimate against $t$ in \eqref{eqn:time_t_estimate2}. After applying the iterative algorithm to find the initial value $f_n$ and optimal $\lambda_*$, we apply the forward solver $\mathcal{F}_t$ for various $t$ and compare $\mathcal{F}_tf_n$ with $\mathcal{F}_tf^*$ at different $t$ values. We then determine the fitted slope of $\log_{10}(\|\mathcal{F}_tf_n-\mathcal{F}_tf^*\|)$ against $t$ to be $-131.02$, which is close to the predicted slope of $\frac{\log_{10}\lambda_*}{2T}=-135.81$.

	 \begin{figure}[htbp]
	 	\centering
	 	\begin{subfigure}{0.32\textwidth}
	 		\includegraphics[width =\linewidth]{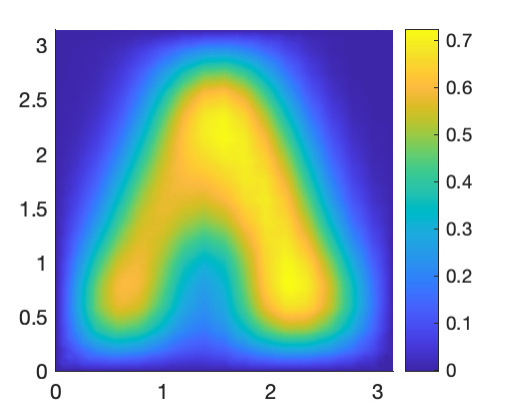}
	 		\caption{observation at terminal time}
	 	\end{subfigure}
	 	\begin{subfigure}{0.32\textwidth}
	\includegraphics[width =\linewidth]{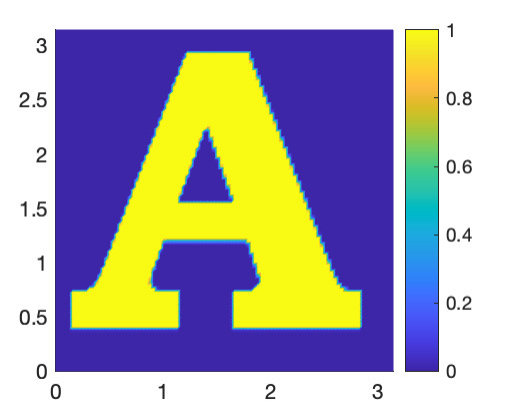}
	\caption{ground truth}
\end{subfigure}
\begin{subfigure}{0.32\textwidth}
	\includegraphics[width =\linewidth]{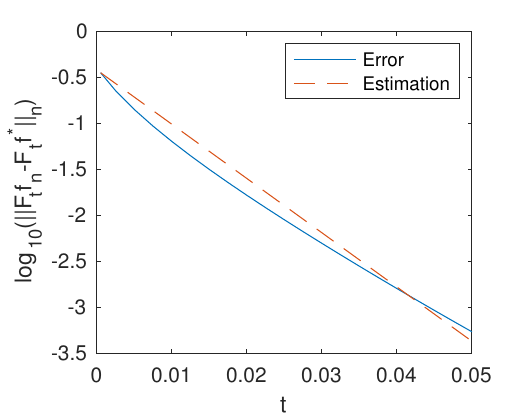}
	\caption{fitted slope: -131.02 predicted slope: -135.81}
\end{subfigure}\\
	 	\begin{subfigure}{0.32\textwidth}
	\includegraphics[width =\linewidth]{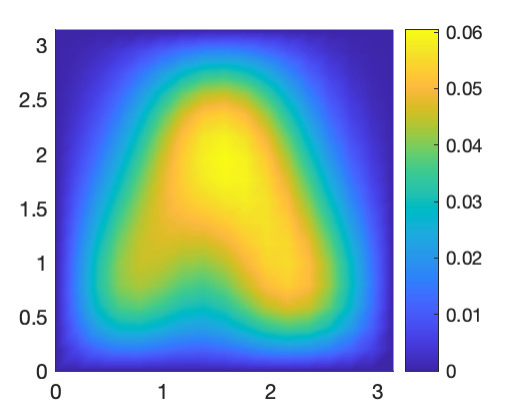}
	\caption{Inversion with $\lambda_0=1$}
 \end{subfigure}
 \begin{subfigure}{0.32\textwidth}
	\includegraphics[width =\linewidth]{figures/eg3_step1.eps}
	\caption{Inversion with $\lambda_1=3.01\times 10^{-4}$}
 \end{subfigure}
 \begin{subfigure}{0.32\textwidth}
	\includegraphics[width =\linewidth]{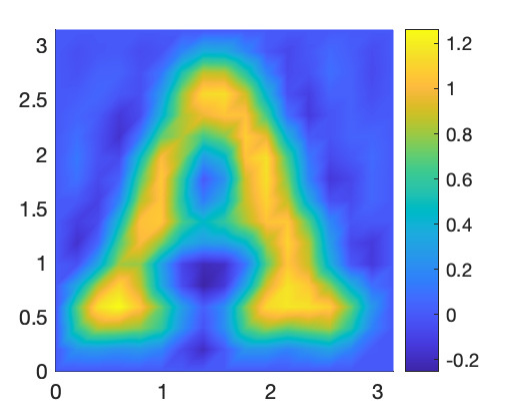}
	\caption{Inversion with $\lambda_2=1.2644\times 10^{-6}$}
\end{subfigure}
	 	\caption{Performance of regularization algorithm when recovering discontinuous $L_2$ initial value (letter A) } 
	 	\centering
	 	\label{fig:A}
	 \end{figure}
	 
\section{Conclusions}

The backward heat conduction problem is a challenging inverse problem. In this paper, we have successfully obtained the stochastic convergence analysis of the regularized solutions for this problem.  By establishing an error estimate for the least-squares regularized minimization problem within the stochastic convergence framework, our analysis demonstrates that the optimal error of the Tikhonov-type least-squares optimization problem depends on the noise level, the number of sensors, and the ground truth. Moreover, we have developed an \textit{a posteriori} algorithm that can find the optimal regularization parameter in the optimization problem without requiring knowledge of the noise level or other prior information. Finally, we demonstrated the accuracy and efficiency of our proposed method through numerical experiments. These results confirm the effectiveness of our method in solving the backward heat conduction problem.

\section*{Acknowledgement}
\noindent 
The research of Z. Zhang is supported by Hong Kong RGC grant projects 17300318 and 17307921, National Natural Science Foundation of China  No. 12171406,  Seed Funding for Strategic Interdisciplinary Research Scheme 2021/22 (HKU), and the Outstanding Young Researcher Award of HKU (2020-21).

\bibliographystyle{plain}

\end{document}